\theoremstyle{thmstyleone}%
\newtheorem{theorem}{Theorem}
\newtheorem{proposition}[theorem]{Proposition}%
\newtheorem{lemma}[theorem]{Lemma}%
\theoremstyle{thmstyletwo}%
\newtheorem{remark}{Remark}%
\theoremstyle{thmstylethree}%
\def\geq{\geqslant}
\def\leq{\leqslant}
\def\epsilon{\varepsilon}
\DeclareMathAlphabet{\mathpzc}{OT1}{cmtt}{m}{n}
\definecolor{darkred}{rgb}{0.85,0,0}
\definecolor{green}{rgb}{0,0.7,0}
\newcommand{\be}{\begin{equation}}
\newcommand{\ee}{\end{equation}}
\newcommand{\ba}{\begin{array}}
\newcommand{\ea}{\end{array}}
\newcommand{\bea}{\begin{eqnarray*}}
\newcommand{\eea}{\end{eqnarray*}}
\newcommand{\bean}{\begin{eqnarray}}
\newcommand{\eean}{\end{eqnarray}}
\newcommand{\lc}{\mathrel{\raise2pt\hbox{${\mathop<\limits_{\raise1pt\hbox{\mbox{$\sim$}}}}$}}}
\newcommand{\gc}{\mathrel{\raise2pt\hbox{${\mathop>\limits_{\raise1pt\hbox{\mbox{$\sim$}}}}$}}}
\newcommand{\ec}{\mathrel{\raise1pt\hbox{${\mathop=\limits_{\raise2pt\hbox{\mbox{$\sim$}}}}$}}}
\newcommand{\nn}{\nonumber}
\numberwithin{equation}{section}
\begin{document}

\title[Energy plus maximum bound preserving RK methods for AC]{Energy plus maximum bound preserving Runge--Kutta methods for the Allen--Cahn equation}


\author[1,2]{\fnm{Zhaohui} \sur{Fu}}\email{fuzhmath@math.ubc.ca}
\author[3,4]{\fnm{Tao} \sur{Tang}}\email{tangt@sustech.edu.cn}
\author[1,4]{\fnm{Jiang} \sur{Yang}}\email{yangj7sustech.edu.cn}

\affil[1]{\orgdiv {Department of Mathematics},\orgname{Southern University of Science and Technology}, \orgaddress{\city{Shenzhen}, \postcode{518055}, \country{China}}}
\affil[2]{\orgdiv {Department of Mathematics}, \orgname{University of British Columbia}, \orgaddress{\city{Vancouver}, \postcode{V6T1Z4}, \country{Canada}}}
\affil[3]{\orgdiv {Division of Science and Technology},\orgname{ BNU-HKBU United International College},\orgaddress{\city{Zhuhai},\postcode{519000},\country{ China}} }
\affil[4]{\orgdiv{SUSTech International Center for Mathematics},\orgname{ Southern University of Science and Technology},\orgaddress{\city{Shenzhen}, \postcode{518055}, \country{China}}}


\abstract{It is difficult to design high order numerical schemes which could preserve both the maximum bound property (MBP) and energy dissipation law for certain phase field equations. Strong stability preserving (SSP) Runge--Kutta methods have been developed for numerical solution of  {\em hyperbolic partial differential equations} in the past few decades, where strong stability means the non-increasing of the maximum bound of the underlying solutions. However, existing framework of SSP RK methods can not handle nonlinear stabilities like energy dissipation law. The aim of this work is to extend this SSP theory to deal with the nonlinear phase field equation of the Allen--Cahn type which typically satisfies both maximum bound preserving (MBP) and energy dissipation law. More precisely, for Runge--Kutta time discretizations, we first derive a general necessary and sufficient condition under which MBP is satisfied; and we further provide a necessary condition under which the MBP scheme satisfies energy dissipation.}

\keywords{Allen--Cahn equation, maximum principle, energy dissipation law, Runge--Kutta methods}



\maketitle

\section{Introduction}\label{Se:intro}
Strong stability preserving Runge--Kutta (SSP-RK) methods have been developed
for numerical solution of  {\em hyperbolic partial differential equations}, starting by Shu \cite{1988} it was observed that some Runge--Kutta methods can be decomposed into {\em convex combinations} of forward Euler steps, and so any convex functional property satisfied by forward Euler will be preserved by these higher-order time discretizations, generally under a different time-step
restriction. This approach was used to develop second- and third-order Runge--Kutta methods that
preserve the strong stability properties of the spatial discretizations developed in that work. In fact,
this approach also guarantees that the intermediate stages in a Runge--Kutta method satisfy the
strong stability property as well. More references in this direction can be found in \cite{GKS2011WSP,TVD,contract,barrier} and a useful
survey article of Gottlieb, Shu and Tadmor \cite{SSP}.

The aim of this work is to extend this SSP theory to deal with the nonlinear phase field equation
of the Allen-Cahn type. To this end, we consider the numerical approximation of the Allen--Cahn equation
\begin{equation} \label{eq1.1}
    u_t =\epsilon \Delta u + \frac{1}{\epsilon} f(u), \qquad x\in \Omega, \; t\in (0,T],
\end{equation}
with initial condition
\begin{equation} \label{eq1.2}
    u(x,0)=u_0(x), \qquad x\in \Omega,
\end{equation}
and the homogeneous Neumann boundary condition or periodic boundary condition, where $\Omega$ is a bounded domain in $R^d$ ($d=1,2,3$).
In this paper, we consider the polynomial double-well potential
\begin{equation} \label{eq1.4}
    F(u)=\frac{1}{4} (1-u^2)^2
\end{equation}
and correspondingly,
\begin{equation} \label{eq1.5}
    f(u)=-F'(u)=u-u^3.
\end{equation}
The solution $u(x,t)$ describes the concentration of two crystal orientations of the same material. In this phase model, $u=1$ represents one orientation and $u=-1$ represents the other. The parameter $\epsilon$ here is the width of the interface between two phases, which is positive and small.

The Allen--Cahn equation can be viewed as the $L^2$ gradient flow of the Ginzburg-Landau free energy
\begin{equation}\label{1.3}
    \mathcal{E}(u)=\int_\Omega \left(\frac{\epsilon}{2} \lvert \nabla u \rvert^2 +\frac{1}{\epsilon} F(u)\right)dx.
    \end{equation}
The $L^2$ gradient flow structure corresponds to an energy dissipation law. This means that the energy is decreasing as a function of time,
\begin{equation} \label{eq1.6}
    \frac{d\mathcal{E}}{dt}=-\int_\Omega \left(\epsilon \Delta u+\frac{1}{\epsilon} f(u)\right)^2 dx \leq 0.
\end{equation}
Another significant feature of the Allen--Cahn equation is its maximum bound preserving (MBP) property in the sense
\begin{equation} \label{eq1.7}
 \Vert u(\cdot, t)\Vert _\infty \le 1
 \end{equation}
 provided that the initial and boundary values are bounded by 1.

The Allen--Cahn equation was originally introduced by Allen and Cahn in \cite{AC} to describe the motion of anti-phase boundaries in crystalline solids. In this context, $u$ represents the concentration of one of the two metallic components of the alloy and
the parameter $\epsilon$  represents the interfacial width, which is small compared to the
characteristic length of the laboratory scale. The homogenous Neumann boundary
condition implies that no mass loss occurs across the boundary walls. Since then, the Allen--Cahn equation has been widely applied to many complicated moving interface problems in materials science and fluid dynamics through a phase-field approach.  Since essential features of
the Allen--Cahn equation are the dissipation law (\ref{eq1.6}) and the MBP property (\ref{eq1.7}), it is important to design numerical schemes satisfying both of them.


There have been many energy-dissipation studies for various numerical schemes for the Allen--Cahn equation, see, e.g., \cite{gradsys,mixedvar,epitaxial1, robust, epitaxy4}, and there have been also many recent works on MBP schemes, see, e.g., \cite{MBPsemipara,ETD1,LiYangZhou,1dDMP}.
 It is shown in \cite{ETD1} that the first- and second-order exponential time differencing (ETD) schemes satisfy MBP (\ref{eq1.7}) unconditionally, while \cite{MBPsemipara} established an abstract framework on MBP for more general semi-linear parabolic equations.
Note that most of the relevant works consider the MBP (\ref{eq1.7}) and the energy dissipation law (\ref{eq1.6}) separately. The exception includes \cite{genAC,ATA,IMEXAC}, but the schemes under consideration are only of first-order accuracy in time.

The present work seems to be the first effort to study high-order time discretizations aiming to preserve both (\ref{eq1.6}) and (\ref{eq1.7}). By applying Shu's SSP-RK theory \cite{1988}, i.e., using the property of the forward Euler method repetitively, we will first obtain a sufficient condition to verify whether a Runge--Kutta method is MBP, and also give a necessary and sufficient condition for $s$-stage $s$-th order MBP-RK methods. Both results will be established by using the so-called Butcher Tableau so the results are easy to verify. Moreover, we will provide a necessary condition to judge whether the MBP-RK solutions preserve the energy dissipation law.
Finally, we will provide some RK2, RK3 and RK4 methods which satisfy both (\ref{eq1.6}) and (\ref{eq1.7}).
A special RK3 method violating the energy dissipation law will be also reported.

The paper is organized as follows. Section \ref{sec2} contains some preliminaries and notations. Section \ref{sec3} analyzes high-order MBP-RK methods to the Allen--Cahn equation by using Shu's theory. We build up the relationship between the Butcher Tableau and the so-called Shu-Osher form \cite{SO1988JCP}. Section \ref{sec4} studies how to preserve the energy dissipation law (\ref{eq1.6}) for the relevant Runge--Kutta methods. Section \ref{sec5} applies the theory of Sections \ref{sec3} and \ref{sec4} to some typical Runge--Kutta schemes for the Allen--Cahn equation. The final section provides  some concluding  remarks.

\section{Interplay between the Butcher Tableau and Shu-Osher form}\label{sec2}

The Runge--Kutta methods are a family of implicit and explicit iterative methods used in temporal discretization for the approximate solutions of ordinary differential equations (ODEs). Consider an ODE system in
time $u'=G(u)$. An explicit Runge--Kutta method is commonly written in the form:
\begin{eqnarray}
        && v_0=u_n,\nonumber \\
        && v_{i}=u_n+\tau \sum_{j=0}^{i-1} a_{ij} G(v_{j}),  \qquad 1\leq i \leq s-1\label{eq2.1a}\\
        && u_{n+1}=u_n+\tau \sum _{j=0}^{s-1} b_j G(v_{j}). \nonumber
\end{eqnarray}
In other words, to specify a particular method, one needs to provide the integer $s$ (the number of stages), and the coefficients $a_{ij}$  (for $1 \le j < i \le s$), $b_j$ (for $j = 1, \cdots, s$) and $c_j$ (for $j = 1, \cdots, s-1$). The matrix $(a_{ij})$ is called the Runge--Kutta matrix, while the $b_j$ and $c_j$ are known as the weights and the nodes \cite{Butcherbook}. These data are usually arranged in a mnemonic device, known as a {\em Butcher tableau} (after John C. Butcher):
\be \label{eq2.2a}
\begin{array}
{l|llllll}
0 & 0\\
c_1 & a_{1,0}& 0\\
c_2 & a_{2,0} & a_{2,1}& 0 \\
... & ...& ...& ...& 0\\
c_{s-1} & a_{s-1,0} & a_{s-1,1} & ...&a_{s-1,s-2}& 0\\
\hline
& b_0 &b_1 &... &...& b_{s-1}
\end{array}
\ee
where
\be \label{eq2.3a}
c_i=\sum_{j=0}^{i-1} a_{ij}, \quad i \geq 1.
\ee
 If we define $a_{sj}=b_j$ for all $j\geq 0$, then the scheme (\ref{eq2.1a}) becomes
\begin{eqnarray}
        && u_n=v_0,\nonumber \\
        && v_{i}=u_n+\tau \sum _{j=0}^{i-1} a_{ij} G(v_{j}), \qquad  1\leq i \leq s \label{eq2.4a} \\
        && u_{n+1}=v_{s}. \nonumber
\end{eqnarray}
We further define a strictly lower-triangular matrix $A_L$ as
\begin{equation} \label{eq2.5a}
  A_L=\left[\begin{array}{lllllll}
    0 &\\
    a_{1,0} & 0  \\
    a_{2,0} &  a_{2,1}&0 \\
    ... &... &...&...\\
    a_{s,0} & a_{s,1}& ... &a_{s,s-1}&0
\end{array}\right].
\end{equation}
On the other hand, the Runge--Kutta method can be written in the Shu--Osher form \cite{SO1988JCP}:
\begin{eqnarray}
        && v_0=u_n, \nonumber \\
        && v_i=\sum _{k=0}^{i-1} \Big(\alpha_{ik} v_k+\tau \beta_{ik} G(v_k) \Big), \qquad 1\leq i \leq s\label{eq2.6a}\\
        && u_{n+1}=v_s, \nonumber
\end{eqnarray}
where consistency condition requires
\begin{equation} \label{consist1}
\sum_{k=0}^{i-1} \alpha_{ik}=1, \qquad 1\leq i \leq s.
\end{equation}
 It is observed in Shu \cite{1988,SO1988JCP} if all coefficients are positive, i.e., $\alpha_{ik}> 0$ and $\beta_{ik}\geq 0$, then the solution can be viewed as convex combinations of forward Euler solutions.
 Based on this theory, the consistency condition (\ref{consist1}) and the positivity conditions $\alpha_{ik}\ge 0$ and $\beta_{ik}\geq 0$ can ensure the Strong Stability Preserving (SSP) properties.

 \begin{proposition} (\cite{1988,SO1988JCP})
 If the following so-called RK-SSP condition is satisfied
 \be \label{SSP-cond}
 \sum_{k=0}^{i-1} \alpha_{ik}=1, \quad 1\leq i \leq s; \qquad  \alpha_{ik}\ge 0, \;\; \beta _{ik}\geq 0, \quad 0\le k < i \le s,
 \ee
(when $\alpha_{ik}=0$ then $\beta_{ik}= 0$), then the Runge--Kutta type method of type (\ref{eq2.6a}) satisfies the SSP condition in the sense that
\begin{equation}
\Vert u_{n+1} \Vert \le \Vert u_n \Vert,
\end{equation}
where $\Vert \cdot \Vert$ is the maximum norm or in the TV semi-norm.
\end{proposition}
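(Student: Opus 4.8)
The plan is to follow Shu's original argument \cite{1988,SO1988JCP}: under the RK-SSP condition (\ref{SSP-cond}), each intermediate stage $v_i$ of the Shu--Osher form (\ref{eq2.6a}) is a convex combination of forward Euler updates applied to the earlier stages $v_0,\ldots,v_{i-1}$; hence any stability bound valid for a single forward Euler step --- here strong stability in the maximum norm or the TV semi-norm --- is inherited by $v_i$, and, by induction on $i$, by $u_{n+1}=v_s$.

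First I would isolate the structure of the $i$-th stage. Discarding the indices $k$ with $\alpha_{ik}=0$ (for which $\beta_{ik}=0$ by hypothesis, so those terms vanish), one rewrites (\ref{eq2.6a}) as
\be
v_i=\sum_{k:\,\alpha_{ik}>0}\alpha_{ik}\left(v_k+\tau\,\frac{\beta_{ik}}{\alpha_{ik}}\,G(v_k)\right),
\ee
so the parenthesised quantity is precisely one forward Euler step from $v_k$ with step size $\tau\beta_{ik}/\alpha_{ik}\geq 0$. The hypothesis underlying the proposition is the strong stability of forward Euler for the given spatial discretization, i.e.\ $\Vert w+\sigma G(w)\Vert\leq\Vert w\Vert$ for step sizes $\sigma$ up to some threshold $\sigma_0$; this is exactly where the accompanying restriction on $\tau$ enters, since one needs $\tau\beta_{ik}/\alpha_{ik}\leq\sigma_0$ for all relevant $i,k$. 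Granting this, $\Vert v_k+\tau(\beta_{ik}/\alpha_{ik})G(v_k)\Vert\leq\Vert v_k\Vert$ for each such $k$.

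Next I would run the induction on $i$. The base case $v_0=u_n$ is immediate. Assuming $\Vert v_k\Vert\leq\Vert u_n\Vert$ for all $k<i$, the triangle inequality, the positivity $\alpha_{ik}\geq 0$, the forward Euler estimate above, and the consistency relation $\sum_{k=0}^{i-1}\alpha_{ik}=1$ give
\be
\Vert v_i\Vert\leq\sum_{k=0}^{i-1}\alpha_{ik}\,\Vert v_k+\tau(\beta_{ik}/\alpha_{ik})G(v_k)\Vert\leq\sum_{k=0}^{i-1}\alpha_{ik}\Vert v_k\Vert\leq\left(\sum_{k=0}^{i-1}\alpha_{ik}\right)\Vert u_n\Vert=\Vert u_n\Vert.
\ee
Setting $i=s$ yields $\Vert u_{n+1}\Vert=\Vert v_s\Vert\leq\Vert u_n\Vert$, which is the assertion.

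The convexity and triangle-inequality bookkeeping is routine; the only genuine points requiring care --- and worth spelling out in the written proof --- are the degenerate coefficients (the convention $\alpha_{ik}=0\Rightarrow\beta_{ik}=0$ is exactly what allows $\alpha_{ik}$ to be factored out without dividing by zero) and the fact that the forward Euler strong stability hypothesis holds only up to a maximal step size, so that the conclusion is subject to a CFL-type restriction on $\tau$ governed by the ratios $\alpha_{ik}/\beta_{ik}$ and $\sigma_0$.
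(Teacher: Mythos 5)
Your proof is correct and follows essentially the same route the paper relies on: the paper cites this proposition from Shu's work and reproduces exactly your convex-combination-of-forward-Euler argument (factoring out $\alpha_{ik}$, invoking the forward Euler stability bound under the step-size ratio restriction, and using $\sum_k \alpha_{ik}=1$) in its own MBP theorem in Section 3. Your added care about the degenerate case $\alpha_{ik}=0\Rightarrow\beta_{ik}=0$ and the explicit CFL-type restriction $\tau\beta_{ik}/\alpha_{ik}\le\sigma_0$ is consistent with the paper's treatment, where the restriction appears as $\tau\le\tau_{SSP}$.
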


Below we explore the relationship between the original form (\ref{eq2.1a}) and the Shu--Osher form (\ref{eq2.6a}). We rewrite the Butcher form with the help of the consistency condition (\ref{consist1}):
\be
        v_i =  v_0+\tau \sum _{j=0}^{i-1} a_{ij} G(v_j)= \alpha_{i0} v_0 + \sum_{j=1}^{i-1} \alpha_{ij} v_0+\tau \sum_{j=0}^{i-1} a_{ij} G(v_j).
\ee
We further use (\ref{eq2.1a}) for the above result to obtain
\begin{eqnarray}
          v_i &=& \alpha_{i0} v_0 +\sum_{j=1}^{i-1} \alpha_{ij} \left(v_j -\tau \sum _{k=0}^{j-1} a_{jk} G(v_k)\right) +\tau \sum \limits_{k=0}^{i-1} a_{ik} G(v_k)\nonumber \\
           &=& \sum_{k=0}^{i-1} \left[\alpha_{ik} v_k+ \tau \left(a_{ik}-\sum_{j=k+1}^{i-1}\alpha_{ij} a_{jk} \right) G(v_k)\right],
           \qquad 1\leq i \leq s. \label{2e10}
    \end{eqnarray}
By defining
\be \label{eq2.10a}
\beta_{ik}=a_{ik}-\sum_{j=k+1}^{i-1}\alpha_{ij} a_{jk}, \qquad 0\le k\le i-1,
\ee
the relationship between the original form (\ref{eq2.1a}) and the Shu--Osher form (\ref{eq2.6a}) is established.

\begin{theorem} \label{thm2.1}
If all elements in the strictly lower-triangular matrix $A_L$ in (\ref{eq2.5a}) are positive, i.e. $a_{ik}>0$ for all $0\leq k <i \leq s$, then there exist coefficients $\alpha_{ij},\beta_{ij}\geq 0$ such that the corresponding explicit Runge--Kutta scheme (\ref{eq2.1a}) satisfies the RK-SSP condition.
\end{theorem}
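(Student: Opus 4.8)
The plan is to prove the theorem by an explicit construction of the Shu--Osher coefficients, carried out one row at a time. The starting point is the identity that produced \eqref{eq2.10a}: for \emph{any} numbers $\alpha_{ik}$ ($0\le k<i\le s$) obeying the consistency condition \eqref{consist1}, the Butcher form \eqref{eq2.1a} is identically equal to the Shu--Osher form \eqref{eq2.6a} once one sets $\beta_{ik}$ as in \eqref{eq2.10a}. Hence the whole task reduces to choosing such $\alpha_{ik}\ge 0$, summing to $1$ in each row, whose induced $\beta_{ik}$ are nonnegative and which respect the degeneracy clause ``$\alpha_{ik}=0\Rightarrow\beta_{ik}=0$'' of the RK-SSP condition \eqref{SSP-cond}. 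I will in fact keep every $\alpha_{ik}$ \emph{strictly} positive, so that this clause becomes vacuous.

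Fix a row index $i\in\{1,\dots,s\}$. By \eqref{eq2.10a}, the quantities $\beta_{i0},\dots,\beta_{i,i-1}$ depend only on $\alpha_{i1},\dots,\alpha_{i,i-1}$ and on the fixed, strictly positive Butcher entries $a_{jk}$ with $j<i$, so the rows can be treated independently. Note that $\beta_{i,i-1}=a_{i,i-1}>0$ automatically (empty sum in \eqref{eq2.10a}), while for $0\le k\le i-2$,
\[
\beta_{ik}=a_{ik}-\sum_{j=k+1}^{i-1}\alpha_{ij}\,a_{jk},
\]
which is nonnegative provided the $\alpha_{ij}$ with $j\ge 1$ are chosen small enough, since $a_{ik}>0$ and the $a_{jk}$ are finite. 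Concretely, introduce a single parameter $\delta_i>0$, put $\alpha_{ij}=\delta_i$ for $1\le j\le i-1$ and $\alpha_{i0}=1-(i-1)\delta_i$ (so \eqref{consist1} holds by construction and $\alpha_{i0}\ge 1/i>0$ once $\delta_i\le 1/i$), and take
\[
\delta_i=\min\left\{\tfrac1i,\ \min_{0\le k\le i-2}\frac{a_{ik}}{\sum_{j=k+1}^{i-1}a_{jk}}\right\}>0,
\]
with the inner minimum read as $+\infty$ when $i=1$. Since every $a_{jk}$ is positive, $\delta_i$ is a genuine positive number, all $\alpha_{ik}$ are strictly positive, $\sum_{k=0}^{i-1}\alpha_{ik}=1$, and $\beta_{ik}=a_{ik}-\delta_i\sum_{j=k+1}^{i-1}a_{jk}\ge 0$ for every $k$. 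Collecting these choices over $i=1,\dots,s$ yields coefficients $\{\alpha_{ik},\beta_{ik}\}$ satisfying all requirements in \eqref{SSP-cond} (the implication ``$\alpha_{ik}=0\Rightarrow\beta_{ik}=0$'' being vacuous, as no $\alpha_{ik}$ vanishes), so the associated Shu--Osher scheme obeys the RK-SSP condition and the Proposition above applies.

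The computations here are elementary, and the only point that genuinely requires care is the degeneracy clause of \eqref{SSP-cond}. The naive attempt $\alpha_{i0}=1$, $\alpha_{ik}=0$ for $k\ge 1$ merely rewrites the Butcher form verbatim and gives $\beta_{ik}=a_{ik}>0$ alongside $\alpha_{ik}=0$, violating that clause (and indeed producing something that is not a genuine convex combination of forward-Euler steps). The hypothesis that \emph{all} entries of $A_L$ are strictly positive is precisely what provides the slack needed to spread a small positive weight onto each $\alpha_{ij}$, $j\ge 1$, while still keeping every $\beta_{ik}$ nonnegative; that is the crux of the argument.
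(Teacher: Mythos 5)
Your construction is correct and is essentially the paper's own argument: both proofs use the conversion formula \eqref{eq2.10a}, assign one small positive value to every $\alpha_{ij}$ with $j\ge 1$ (bounded by the ratios $a_{ik}/\sum_{j=k+1}^{i-1}a_{jk}$ and by a cap guaranteeing $\alpha_{i0}>0$), and conclude $\beta_{ik}\ge 0$ from the strict positivity of the Butcher entries. Your per-row parameter $\delta_i$ and explicit handling of the $i=1$ case are only cosmetic refinements of the paper's single global $\delta$, so the two proofs coincide in substance.
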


\begin{proof}
We need to use the given positive elements $a_{ij}$ $(0\le j < i\le s)$ to construct positive coefficient pairs
$(\alpha_{ik}, \beta_{ik})$. Let
\begin{equation}
    \delta= \min_{0\leq k<i \leq s} \frac{a_{ik}}{ \sum_{j=k+1}^{i-1} a_{jk}},
\end{equation}
 and let
 \begin{eqnarray}
  && \alpha _{ij}= \min \left\{\frac{\delta}{2},\; \frac{1}{2(i-1)}\right\}, \qquad \forall\; 1\leq i \leq s,\; 1\leq j <i, \nonumber \\
  && \alpha_{i0}=1-(i-1)\cdot a_{i1}, \qquad 1\leq i \leq s.
  \end{eqnarray}
  It is easy to check that $\alpha_{ik} >0$ for all $0\leq k< i\leq s$. Using the relation (\ref{eq2.10a}) and the fact $\alpha_{ij}
  < \delta$ gives
  \begin{eqnarray}
   \beta_{ik} &=& a_{ik}-\sum_{j=k+1}^{i-1}\alpha_{ij} a_{jk} \nonumber \\
   &>& a_{ik} - \sum_{j=k+1}^{i-1}\delta a_{jk} \nonumber \\
   &=& \sum_{j=k+1}^{i-1} a_{jk} \left( \frac{a_{ik}}{\sum_{j=k+1}^{i-1} a_{jk}} - \delta\right) \ge 0.
   \end{eqnarray}
   Finally, it is easy to observe that
   \be
    \sum_{k=0}^{i-1} \alpha_{ik}=\alpha_{i0} + \sum_{k=1}^{i-1} \alpha_{i1} = 1- (i-1)\cdot \alpha_{i1} +(i-1)\cdot \alpha_{i1}=1.
   \ee
 This completes the proof of the theorem.
 \end{proof}

The above theorem gives a simple sufficient condition which can convert a Runge--Kutta method to be of Shu--Osher type satisfying the RK-SSP condition (\ref{eq2.1a}). Below we derive a sufficient and necessary condition for a wide class of Runge--Kutta method.

\begin{theorem} \label{thm2.2}
An explicit Runge--Kutta method with non-zero sub-diagonal elements satisfies the RK-SSP condition (\ref{SSP-cond}) if and only if all elements in the strictly lower-triangular part of $A_L$ in (\ref{eq2.5a}) are positive.
\end{theorem}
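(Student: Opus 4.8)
The plan is to prove the two implications separately. The forward implication is immediate: if every entry of the strictly lower-triangular part of $A_L$ is positive, i.e.\ $a_{ik}>0$ for all $0\le k<i\le s$, then this is exactly the hypothesis of Theorem~\ref{thm2.1}, whose proof exhibits nonnegative pairs $(\alpha_{ik},\beta_{ik})$ obeying the consistency condition (\ref{consist1}) and the positivity requirements, hence the RK-SSP condition (\ref{SSP-cond}). This direction does not even use the non-zero sub-diagonal hypothesis, so all the content is in the converse.

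For the converse, ``satisfies the RK-SSP condition'' is read as: the method admits \emph{some} Shu--Osher representation (\ref{eq2.6a}) whose coefficients obey (\ref{SSP-cond}). The first step is to observe that the expansion carried out in (\ref{2e10}) applies to any such representation --- it uses only the consistency condition --- so the Butcher coefficients of the scheme satisfy the relation (\ref{eq2.10a}), equivalently
\be\label{abrel}
a_{ik}=\beta_{ik}+\sum_{j=k+1}^{i-1}\alpha_{ij}a_{jk},\qquad 0\le k<i\le s.
\ee
Two facts follow at once from (\ref{abrel}): with $k=i-1$ the sum is empty, so $a_{i,i-1}=\beta_{i,i-1}$; and since all $\alpha_{ij},\beta_{ik}\ge 0$, an easy induction on $i$ gives $a_{ik}\ge 0$ for every $k<i$. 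Thus the RK-SSP property by itself already forces the strictly lower-triangular part of $A_L$ to be nonnegative.

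It remains to upgrade ``nonnegative'' to ``positive'', and this is precisely where the sub-diagonal hypothesis enters. Since $a_{i,i-1}\ne 0$ and $a_{i,i-1}=\beta_{i,i-1}\ge 0$, we get $\beta_{i,i-1}>0$ for every $i$; the clause of (\ref{SSP-cond}) that $\alpha_{ik}=0$ forces $\beta_{ik}=0$, read contrapositively, then gives $\alpha_{i,i-1}>0$ for every $i$. Now I would fix $k$ and induct on $i$ from $k+1$ up to $s$: the base case $i=k+1$ is $a_{k+1,k}=\beta_{k+1,k}>0$, and for $i>k+1$ one keeps only the $j=i-1$ term of the sum in (\ref{abrel}) to get $a_{ik}\ge \alpha_{i,i-1}\,a_{i-1,k}>0$, using $\alpha_{i,i-1}>0$ and the induction hypothesis. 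Hence $a_{ik}>0$ for all $0\le k<i\le s$, which finishes the proof.

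The main point to watch --- and the only real obstacle --- is that ``satisfies RK-SSP'' is an existential statement over Shu--Osher representations, so one must be sure the relation (\ref{abrel}) holds for \emph{every} consistent representation and not merely for the canonical one built in (\ref{eq2.10a}); granting that, the rest is a short induction. The argument also shows the hypothesis is sharp: dropping the non-zero sub-diagonal assumption leaves only $a_{ik}\ge 0$, since that assumption is exactly what seeds the induction with the strict inequality $\alpha_{i,i-1}>0$.
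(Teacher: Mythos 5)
Your proof is correct and follows essentially the same route as the paper: both rest on the identity $\beta_{ik}=a_{ik}-\sum_{j=k+1}^{i-1}\alpha_{ij}a_{jk}$ from (\ref{eq2.10a}), the nonnegativity of the Shu--Osher coefficients, the convention that $\alpha_{ik}=0$ forces $\beta_{ik}=0$, and the non-zero sub-diagonal hypothesis. The only difference is presentational: you run a direct column-wise induction (nonnegativity first, then positivity seeded by $\alpha_{i,i-1}>0$), whereas the paper packages the same mechanism as a contradiction via the first non-positive entry $a_{pq}$ in row-major order.
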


\begin{proof}
The sufficient condition is proved in Theorem \ref{thm2.1}. We now prove the necessary condition. In this case, the explicit Runge--Kutta method with non-zero sub-diagonal elements satisfies the RK-SSP condition (\ref{SSP-cond}). Define $order(a_{ik})=(i-1)*s+k, \; 0\leq k<i \leq s $. If there exist non-positive elements in $A$, we take the first of them in the sense of order, $a_{pq} \le  0$. Since the RK scheme satisfies
(\ref{SSP-cond}), we have
\begin{equation} \label{eqt0}
    \alpha_{ik} \geq 0, \;\;\; \beta_{ik}=a_{ik}-\sum_{j=k+1}^{i-1}\alpha_{ij} a_{jk} \geq 0, \quad  0\le k\le i-1.
\end{equation}
In particular, we have
\begin{equation} \label{eqt1}
    a_{pq}-\sum _{j=q+1}^{p-1}\alpha_{pj} a_{jq} \geq 0.
\end{equation}
As $a_{pq}$ is the first non-positive element in the sense of order, all $a_{jq}$ in the summation above  are all positive. We then have two cases.
\begin{itemize}
\item
 If $a_{pq} <0$, then  it is easy to see a contradiction to (\ref{eqt1}).
 \item
  If $a_{pq}= 0$, as the other $a_{jq}$ is positive then it follows from (\ref{eqt1}) that all $\alpha_{pj}$ in (\ref{eqt1}) are $0$, and in particular, $\alpha_{p,p-1}=0$, which leads to $\beta_{p,p-1}=0$. Note that by (\ref{eq2.10a}) we have $\beta_{p,p-1}=a_{p,p-1}$.
  Consequently, we have $a_{p,p-1}=0$ which contradicts the non-zero sub-diagonal element assumption.
\end{itemize}
This completes the proof of the theorem.
\end{proof}

One direct result is the following proposition.
\begin{proposition}
An $s$-stage $s$th-order explicit Runge--Kutta method satisfies the RK-SSP condition (\ref{SSP-cond}) if and only if all elements in the strictly lower-triangular part of $A_L$ are positive.
\end{proposition}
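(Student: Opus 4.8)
The plan is to deduce the proposition directly from Theorem \ref{thm2.2}. That theorem already settles the equivalence for any explicit Runge--Kutta method whose sub-diagonal entries $a_{i,i-1}$ ($1\le i\le s$, with $a_{s,s-1}=b_{s-1}$) are all non-zero, so the only thing left to verify is that an $s$-stage explicit scheme of (classical) order $s$ automatically enjoys this property. Hence the heart of the argument is the claim: \emph{if an $s$-stage explicit Runge--Kutta method has order $s$, then $a_{i,i-1}\neq 0$ for every $1\le i\le s$.}

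To prove the claim I would invoke the Butcher order conditions (see, e.g., \cite{Butcherbook}). Among the equations that must hold for order $s$ is the one attached to the ``tall'' rooted tree, the path on $s$ vertices, whose density is $s!$; written out it reads
\[
\sum b_{j_0}\, a_{j_0 j_1}\, a_{j_1 j_2}\cdots a_{j_{s-2} j_{s-1}}=\frac{1}{s!},
\]
the indices running over the stage indices $0,\dots,s-1$ and the product containing $s-1$ factors. Writing $b_{j_0}=a_{s j_0}$ turns the left-hand side into a sum of products $a_{s j_0} a_{j_0 j_1}\cdots a_{j_{s-2} j_{s-1}}$ of $s$ factors, with index slots $s, j_0,\dots,j_{s-1}$. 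Since the method is explicit, $A_L$ in (\ref{eq2.5a}) is strictly lower triangular, so such a product vanishes unless $s>j_0>j_1>\cdots>j_{s-1}\ge 0$; but then $j_0,\dots,j_{s-1}$ are $s$ strictly decreasing integers inside $\{0,1,\dots,s-1\}$, forcing $j_k=s-1-k$, so the only surviving term is $b_{s-1}\,a_{s-1,s-2}\,a_{s-2,s-3}\cdots a_{1,0}$. The order condition forces this product to equal $1/s!\neq 0$, hence each of its factors is non-zero: $a_{i,i-1}\neq 0$ for $1\le i\le s-1$ and $b_{s-1}=a_{s,s-1}\neq 0$, which is exactly the claim.

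With the claim in hand the proposition follows at once: an $s$-stage $s$th-order explicit method has non-zero sub-diagonal entries, so Theorem \ref{thm2.2} applies verbatim and yields that (\ref{SSP-cond}) holds if and only if all entries in the strictly lower-triangular part of $A_L$ are positive. (For completeness one notes that the ``if'' direction needs no order hypothesis at all, being a direct instance of Theorem \ref{thm2.1}.)

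The step I expect to be the main obstacle is making the order-condition argument self-contained for readers not fluent in Butcher's tree calculus. If one prefers to avoid that formalism, the same conclusion is reachable by a Taylor-expansion argument: a vanishing sub-diagonal entry $a_{p,p-1}$ forces the stage value $v_{p-1}$ to enter $u_{n+1}$ only through terms one power of $\tau$ higher than otherwise, so the local truncation error cannot be $O(\tau^{s+1})$. Some care is needed there because $v_{p-1}$ is not literally deleted from the scheme---it may still appear in later stages $v_{p+1},\dots,v_s$---which is precisely why isolating the single monomial $b_{s-1}a_{s-1,s-2}\cdots a_{1,0}$ through the path-tree order condition is the cleanest route.
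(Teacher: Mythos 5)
Your proposal is correct and follows essentially the same route as the paper: the paper's proof simply asserts that matching the highest-order Taylor term forces $\frac{1}{s!}=a_{1,0}a_{2,1}\cdots a_{s,s-1}$, hence all sub-diagonal entries are non-zero, and then Theorem \ref{thm2.2} applies. Your tall-tree (path-tree) order-condition argument is just a more detailed justification of that same identity, so the two proofs coincide in substance.
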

\begin{proof}
For the $s$-order RK scheme, in order to match the highest order term in the Taylor expansion, we must have
\[
\frac{1}{s!}=a_{1,0}a_{2,1} \cdots a_{s,s-1}
\]
which guarantees all sub-diagonal elements $a_{i,i-1}$ are non-zero.
\end{proof}

The following proposition is given in \cite{Butcherbook,contract}, while Theorem \ref{thm2.2}  provides a different perspective.

\begin{proposition}
There does not exist any 4-stage 4th-order explicit Runge--Kutta method satisfying the RK-SSP condition (\ref{SSP-cond}).
\end{proposition}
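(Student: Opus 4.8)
The plan is to reduce the statement, via the $s$-stage $s$th-order Proposition above, to a purely algebraic fact about Butcher tableaux, and then settle that fact with the fourth-order tree conditions. By that Proposition (with $s=4$), a $4$-stage $4$th-order explicit Runge--Kutta method satisfies the RK-SSP condition (\ref{SSP-cond}) \emph{if and only if} every entry of the strictly lower-triangular part of $A_L$ is positive; recalling $a_{4j}=b_j$, this amounts to $a_{ij}>0$ for all $0\le j<i\le 3$ together with $b_0,b_1,b_2,b_3>0$. So it suffices to prove that no $4$-stage $4$th-order explicit Runge--Kutta method has all of these coefficients positive.

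To do that I would assume the contrary and work with the eight classical order conditions (the rooted-tree conditions up to order $4$), using two standard facts: because $c_0=0$, the ``tall'' fourth-order condition collapses to the single surviving term $b_3a_{32}a_{21}a_{10}=\tfrac1{24}$, and a $4$-stage $4$th-order explicit method necessarily has last node $c_3=1$. Introduce the stage defects $d_i=\sum_{j=0}^{i-1}a_{ij}c_j-\tfrac12 c_i^2$, so $d_0=0$ and $d_1=-\tfrac12 c_1^2<0$. Pairing each ``path'' tree with its ``bushier'' one-order-higher companion yields the three identities $\sum_i b_i d_i=0$, $\sum_i b_i c_i d_i=0$, $\sum_{i,j} b_i a_{ij} d_j=0$. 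Under the positivity hypothesis the third identity together with $d_1<0$ forces $d_2>0$; combining the first two with $c_3=1$ gives $b_2 d_2(1-c_2)=\tfrac12 b_1 c_1^2(1-c_1)$, so $1-c_1$ and $1-c_2$ have the same sign. Rewriting the quadrature conditions as $\sum_i b_i c_i^k(1-c_i)=\mathrm{const}$ for $k=0,1,2$, positivity of the weights forces $c_1,c_2\in(0,1)$, and the relations $(c_1-c_2)\,b_1c_1(1-c_1)=\tfrac1{12}(1-2c_2)$ etc.\ confine $(c_1,c_2)$ to the regime where one node lies below $\tfrac12$ and the other above; moreover $d_2>0$ with $a_{21}\le c_2$ gives $c_1>\tfrac12 c_2$.

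The remaining work, and the main obstacle, is to eliminate this last regime. One case is clean: if $c_1=c_2$ the node localization forces $c_1=c_2=\tfrac12$, and then the quadrature conditions force $b_3=\tfrac16$, whereas $\sum_{i,j}b_ic_ia_{ij}c_j=\tfrac18$ and $\sum_{i,j}b_ia_{ij}c_j=\tfrac16$ together with $a_{30}>0$ give $b_3(1-a_{30})=\tfrac16$, hence $b_3>\tfrac16$ --- a contradiction (this is the classical RK4 configuration, whose genuine realizations carry zeros in the tableau). For $c_1\ne c_2$ the nodes $0,c_1,c_2,1$ are distinct, so $b_0,\dots,b_3$ become explicit functions of $(c_1,c_2)$, and one checks that together with $b_3a_{32}a_{21}a_{10}=\tfrac1{24}$ and the row-sum bounds ($a_{21}\le c_2$, $a_{10}=c_1$, $a_{32}\le c_3=1$, $\sum_j a_{3j}=1$) positivity of all coefficients cannot hold on the admissible $(c_1,c_2)$-window. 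This final verification is the technical heart; in practice one may short-circuit it by quoting the classical Butcher barrier \cite{Butcherbook,contract}, the contribution here being precisely the reduction in the first paragraph --- that Theorem \ref{thm2.2} turns ``no SSP RK4(4)'' into the non-positivity of a Butcher tableau.
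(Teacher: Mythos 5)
Your first paragraph is exactly the paper's (implicit) reduction: by the $s$-stage $s$th-order proposition following Theorem \ref{thm2.2}, the claim is equivalent to the purely algebraic statement that no $4$-stage $4$th-order tableau can have every strictly lower-triangular entry (including all weights $b_j$) positive. Your intermediate deductions are also sound: the identities $\sum_i b_id_i=\sum_i b_ic_id_i=\sum_{i,j}b_ia_{ij}d_j=0$ follow correctly from the order conditions, $d_1<0$ plus positivity does force $d_2>0$, the relation $b_2d_2(1-c_2)=\tfrac12 b_1c_1^2(1-c_1)$ is right, and your disposal of the $c_1=c_2$ case (forcing $c_1=c_2=\tfrac12$, $b_3=\tfrac16$, then $b_3(1-a_{30})=\tfrac16$ with $a_{30}>0$) is a correct self-contained argument.

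The genuine gap is the case $c_1\neq c_2$: the sentence ``one checks that \ldots positivity of all coefficients cannot hold on the admissible $(c_1,c_2)$-window'' is precisely the hard part of the classical barrier (in Gottlieb--Shu and Kraaijevanger this is a nontrivial case analysis over Kutta's two-parameter family), and you give no argument for it; even the preliminary claim that positive weights force $c_1,c_2\in(0,1)$ is asserted rather than derived. Your proposed short-circuit --- quoting the classical barrier from \cite{Butcherbook,contract} --- does not repair this, because that barrier (nonexistence of a $4$-stage $4$th-order SSP method with nonnegative Shu--Osher coefficients) is essentially the proposition itself, so the reduction of your first paragraph then does no work. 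The paper's proof is different in exactly this respect: it cites a different ingredient, namely the classification that the classical RK4 is the only $4$th-order method whose tableau coefficients are all nonnegative, and then the proof reduces to the one-line observation that $a_{21}=a_{30}=a_{31}=0$ violates the strict positivity required by Theorem \ref{thm2.2}. Either complete the $c_1\neq c_2$ elimination explicitly, or cite the classification (or radius-of-absolute-monotonicity) result the way the paper does and drop the unfinished order-condition analysis.
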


\begin{proof}
The only 4th-order RK whose coefficients are all non-negative is the classic RK4 \cite{Butcherbook}, whose Butcher tableau reads
\[
\begin{array}{c|cccc}
     0& 0 \\
     \frac 12 & \frac 12 \\
     \frac 12 & 0 & \frac 12\\
     \frac 12 & 0 & 0 &\frac 12\\
     \hline
        & \frac 16 & \frac 13 & \frac 13 & \frac 16
\end{array}
\]
Note that $a_{21}=a_{30}=a_{31}=0$, i.e., they are not positive. Consequently, the classical RK4 does not satisfy the RK-SSP condition (\ref{SSP-cond}).
\end{proof}

\begin{proposition}
Any irreducible RK method whose elements in the strictly lower-triangular part are all positive can not have order greater than 4.
\end{proposition}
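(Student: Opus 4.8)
The plan is to reduce the statement to the classical order barrier for strong stability preserving explicit Runge--Kutta methods, exploiting the equivalence packaged in Theorems \ref{thm2.1} and \ref{thm2.2}: positivity of the strictly lower-triangular part of $A_L$ forces the method into a nonnegative Shu--Osher form, and a method admitting such a form falls squarely within the scope of the well-known nonexistence results of \cite{contract,barrier}.

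First I would observe that if every element in the strictly lower-triangular part of $A_L$ in (\ref{eq2.5a}) is positive, then in particular $a_{i,i-1}>0$ for $1\le i\le s$, so the sub-diagonal entries are non-zero; moreover no two rows of $A_L$ can coincide, since rows $i$ and $j$ with $i\ne j$ carry a different number of non-zero entries, and each stage feeds forward into a later one (with $b_i=a_{s,i}>0$), so the method is in fact automatically irreducible. Next, by Theorem \ref{thm2.1} there exist coefficients $\alpha_{ik},\beta_{ik}\ge 0$ with $\sum_{k=0}^{i-1}\alpha_{ik}=1$ realizing the Shu--Osher form (\ref{eq2.6a}), so the method has a strictly positive SSP coefficient (equivalently, a positive radius of absolute monotonicity). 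At that point I would invoke the classical barrier theorem: an irreducible explicit Runge--Kutta method admitting a nonnegative Shu--Osher decomposition cannot have order greater than $4$ (Kraaijevanger; see also \cite{contract,barrier} and the propositions preceding this one). Combining these observations yields the claim.

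The main obstacle is that the last step is genuinely non-elementary: the order barrier is a deep fact whose proof runs through the theory of absolute monotonicity and shows that the Butcher order conditions for order $p\ge 5$ are incompatible with the sign constraints $\alpha_{ik}\ge 0$, $\beta_{ik}\ge 0$ (forcing some $\beta_{ik}<0$). If one wanted a self-contained argument in place of a citation, one would have to reproduce this sign analysis of the order conditions; short of that, the only care required here is to confirm that the hypotheses under which the barrier theorem is stated --- irreducibility together with a genuine nonnegative Shu--Osher representation --- are met, which is precisely what the first two steps verify. I would also remark that for $s\le 4$ the preceding classical RK4 obstruction already gives the conclusion, so the substantive content lies in the case $s\ge 5$.
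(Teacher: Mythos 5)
Your argument is correct and follows essentially the same route as the paper: apply Theorem \ref{thm2.1} to convert positivity of the strictly lower-triangular part into a nonnegative Shu--Osher (RK-SSP) representation, then invoke the known order barrier for irreducible SSP Runge--Kutta methods from \cite{contract,barrier}. Your additional observation that irreducibility is automatic is a harmless extra, since the statement already assumes it.
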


\begin{proof}
It is known that there is no irreducible SSP-RK method which has order greater than 4 \cite{contract,barrier}. If an explicit irreducible Runge--Kutta method has positive  strictly lower-triangular part, then based on Theorem \ref{thm2.1} it must satisfy the RK-SSP condition, which contradicts the existing theory of \cite{contract,barrier}.
\end{proof}

\begin{remark}
Theorems in this section could also be derived by the contractivity theory \cite{contract,ferracina2004}, although the approaches and illustrations are different.
\end{remark}

\section{MBP-RK methods for the Allen--Cahn equation}\label{sec3}

We will use the central finite difference discretization to the Allen--Cahn equation in space.
Without loss of generality, we consider the computational domain $[0,2\pi]$ with the periodic boundary condition and let the space mesh size $ h={2\pi}/{N} $. Denote the grid points as $\{ x_j=jh, j=0,1,...,N-1\}$ and the forward finite difference matrix of $\partial_x$ by $D_1$
\begin{equation}
    D_1=\frac{1}{h} \left[\begin{array}{ccccc}
         1& & & & -1 \\
         -1&1& & &\\
          & \cdots & \cdots & \cdots  & \\
          & & &-1&1\\
    \end{array}\right]_{N\times N}.
\end{equation}
Thus we have the central difference discretization operator $D=-D_1^T D_1$ for the Laplacian $\Delta$. It is well-known that the discrete operator $D$ is of second-order accuracy to approximate the Laplacian operator.

\begin{lemma} \label{label3.1}
Given any vector $\mathbf{v}$ and scalar $\alpha>2$, the following inequality holds:
\begin{equation}
\left\|\left( I+\frac 1{\alpha} h^2 D \right)\mathbf{v}\right\|_\infty\leq \|\mathbf{v}\|_\infty.
\end{equation}
\end{lemma}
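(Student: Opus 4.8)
The plan is to reduce the claim to the elementary fact that a square matrix with non-negative entries whose rows each sum to one has induced maximum-norm equal to one.

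First I would make the matrix $I+\frac1\alpha h^2 D$ explicit. Writing the circulant $D_1=h^{-1}(I-P)$, where $P$ is the cyclic permutation matrix with $P_{j,j-1}=1$ (indices taken mod $N$), the definition $D=-D_1^TD_1$ gives $h^2D=-(I-P)^T(I-P)=P+P^T-2I$. Hence
\begin{equation}
\M:=I+\frac1\alpha h^2D=\Big(1-\frac2\alpha\Big)I+\frac1\alpha\big(P+P^T\big),
\end{equation}
so $\M$ has diagonal entries $1-\frac2\alpha$, nearest-neighbour (periodic) off-diagonal entries $\frac1\alpha$, and zeros elsewhere.

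Next I would record the two properties of $\M$ that carry the argument. Because $\alpha>2$, we have $1-\frac2\alpha>0$ and $\frac1\alpha>0$, so every entry of $\M$ is non-negative; and each row of $\M$ sums to $\big(1-\frac2\alpha\big)+2\cdot\frac1\alpha=1$, i.e. $\M\mathbf{1}=\mathbf{1}$ for the all-ones vector $\mathbf{1}$ (this unit row-sum is simply the discrete consistency $D\mathbf{1}=0$). Then for any $\mathbf{v}$ and any index $j$,
\begin{equation}
\big|(\M\mathbf{v})_j\big|\le\sum_k\M_{jk}\,|\mathbf{v}_k|\le\|\mathbf{v}\|_\infty\sum_k\M_{jk}=\|\mathbf{v}\|_\infty,
\end{equation}
and maximizing over $j$ yields $\|\M\mathbf{v}\|_\infty\le\|\mathbf{v}\|_\infty$.

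There is no genuine obstacle here; the only point needing care is the bookkeeping of the entries of $D_1^TD_1$ under the periodic boundary condition, so that the row sums of $\M$ come out to exactly $1$ and not merely $1$ in the interior — the circulant algebra above makes this transparent, and it also clarifies that the threshold $\alpha>2$ is exactly what keeps the self-coefficient $1-\tfrac2\alpha$ non-negative. (Note an $\ell^2$/Fourier argument bounds only the spectral radius and is not enough for the $\ell^\infty$ estimate, which is why the non-negativity-plus-row-sum reasoning is essential.) The same reasoning applies in the homogeneous Neumann case, where the relevant rows of $D$ still have vanishing sum and diagonal magnitude at most $2/h^2$, so $I+\frac1\alpha h^2D$ remains non-negative with unit row sums once $\alpha>2$.
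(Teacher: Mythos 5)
Your proof is correct and follows essentially the same route as the paper: both arguments rest on the observation that $I+\frac1\alpha h^2D$ (equivalently $\alpha I+h^2D$) has non-negative entries once $\alpha>2$ and constant row sums inherited from $D\mathbf{1}=0$, so the $\ell^\infty$ bound follows from the triangle inequality row by row. Your explicit circulant computation $h^2D=P+P^T-2I$ just makes the paper's ``tri-diagonal with positive entries, rows summing to $\alpha$'' bookkeeping transparent; no substantive difference.
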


\begin{proof}
When $\alpha>2$, $\alpha I+D$ is a tri-diagonal matrix whose elements are all positive. Besides, Note that the sum of each row of $D$ is zero. Consequently, the sum of every row of $\alpha I+h^2 {D}$ equals to constant $\alpha$. Observe that
\begin{equation}
\begin{aligned}
    \|(\alpha I+h^2{D})\mathbf{v}\|_\infty &=\max_j \| a_j \mathbf{v}_{j-1}+b_j \mathbf{v}_j +c_j \mathbf{v}_{j+1}\|_\infty \\
    & \leq (\lvert a_j \rvert+\lvert b_j \rvert+\lvert c_j \rvert)\max_j \|\mathbf{v}_j\|_\infty=\alpha\|\mathbf{v}\|_\infty. \\
\end{aligned}
\end{equation}
This completes the proof.
\end{proof}

\begin{lemma}[\cite{lap1}]
Denote the discrete Fourier transform as $F_N$ and the conjugate transpose as  $(\cdot)^H$, then it holds that
\begin{equation}
    D=F_N^H \Lambda F_N, \quad \Lambda=\text{diag}(\left[ \lambda_0, \cdots,\lambda_{N-1} \right]),
\end{equation}
where $\lambda_j=-(2-2\cos(jh))/{h^2}$ are eigenvalues of $D$.
\end{lemma}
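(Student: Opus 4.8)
The plan is to exploit the circulant structure of the periodic difference operators. First I would observe that the forward difference matrix $D_1$ displayed above is circulant: each row is a cyclic shift of $\frac1h(1,-1,0,\dots,0)$, so $D_1$ is the discrete convolution whose symbol is $p(\omega)=\frac1h(1-\omega^{-1})$. It is classical that every $N\times N$ circulant matrix is unitarily diagonalized by the discrete Fourier transform $F_N$, i.e.\ $D_1=F_N^H M F_N$ with $M=\mathrm{diag}(\mu_0,\dots,\mu_{N-1})$, where the $\mu_k$ are found by letting $D_1$ act on the Fourier modes. I would verify this directly: applying $D_1$ to the vector $\v^{(k)}$ with entries $\v^{(k)}_j=e^{\mathrm{i}jkh}$ (with $h=2\pi/N$) gives
\[
\bigl(D_1\v^{(k)}\bigr)_j=\frac1h\bigl(e^{\mathrm{i}jkh}-e^{\mathrm{i}(j-1)kh}\bigr)=\frac1h\bigl(1-e^{-\mathrm{i}kh}\bigr)\v^{(k)}_j ,
\]
so $\mu_k=\frac1h(1-e^{-\mathrm{i}kh})$, and the (normalized) modes $\v^{(k)}$ are precisely the columns of $F_N^H$.

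Second, since $D_1$ has real entries, $D_1^T=D_1^H$; then using the unitarity $F_N^H F_N=I$ and the fact that $M$ is diagonal,
\[
D=-D_1^T D_1=-(F_N^H M F_N)^H(F_N^H M F_N)=-F_N^H \bar{M}M F_N=-F_N^H|M|^2 F_N ,
\]
which is already of the claimed form $F_N^H\Lambda F_N$ with $\Lambda=-|M|^2=\mathrm{diag}(-|\mu_0|^2,\dots,-|\mu_{N-1}|^2)$. Finally I would compute the eigenvalues explicitly: $|\mu_k|^2=\frac1{h^2}\lvert 1-e^{-\mathrm{i}kh}\rvert^2=\frac1{h^2}(2-2\cos(kh))$, hence $\lambda_k=-|\mu_k|^2=-(2-2\cos(kh))/h^2$, matching the statement; these are real and nonpositive, consistent with $D$ being symmetric negative semidefinite.

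The only points needing a little care — the closest thing to an obstacle here — are bookkeeping: fixing the precise normalization and sign convention of $F_N$ so that it is unitary ($F_N^H F_N=I$) and so that $F_N^H$ (not $F_N$) carries the eigenvectors, and checking that the "wrap-around" corner entry of $D_1$ is exactly what makes it genuinely circulant, so that no boundary terms spoil the diagonalization. An equivalent and perhaps cleaner route is to bypass $D_1$ and note that $D=-D_1^T D_1$ is itself the symmetric circulant tridiagonal matrix $\frac1{h^2}\mathrm{tridiag}(1,-2,1)$ with periodic corners, and to diagonalize that circulant matrix in one step by the same Fourier-mode computation; either way the argument is routine once the circulant structure is identified.
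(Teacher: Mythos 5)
Your proof is correct, and it is the standard argument: the paper itself gives no proof of this lemma (it simply cites the reference), and the circulant/DFT diagonalization you carry out --- either via $D_1=F_N^H M F_N$ with $\mu_k=\tfrac1h(1-\e^{-\mathrm{i}kh})$ and then $D=-D_1^TD_1=-F_N^H|M|^2F_N$, or directly on the periodic tridiagonal $\tfrac1{h^2}(1,-2,1)$ --- is exactly how this fact is established in the cited literature. Your worry about whether the Fourier modes sit in $F_N$ or $F_N^H$ is also harmless here, since $\lambda_j=\lambda_{N-j}$ (because $\cos(jh)=\cos((N-j)h)$), so the stated diagonal matrix $\Lambda$ is unaffected by that convention.
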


One direct result of this lemma is the following inverse inequality:
Given any vector $\textbf{u}$, it holds that
\begin{equation} \label{eq3-7}
    0\le-\textbf{u}^T D \textbf{u} \leq \frac{4}{h^2} \textbf{u}^T \textbf{u}.
\end{equation}
Note that the above property holds for more general boundary conditions and domains, and in these situations the coefficient 4 in (\ref{eq3-7}) will be replaced by a constant  $C$ depending only on the boundary conditions and the domain.

For simplicity and for ease of demonstrating the main ideas, in this paper we only consider the 1D case. For multi-dimension cases, by using the tensor product for the discrete Laplacian operator in 2D and 3D, results similar to the 1D case can be obtained.

The semi-discrete finite difference discretization of the Allen--Cahn equation reads
\begin{equation} \label{eq2.7}
    \frac{d}{dt} \textbf{u}= \epsilon D \textbf{u} + \frac{1}{\epsilon} f(\textbf{u}) =: G(\mathbf{u}).
\end{equation}

We list following two properties for system (\ref{eq2.7}) resulting from the so-called method-of-line approach.
\begin{itemize}
\item
If the initial value satisfies $\|\mathbf{u}_0\|_{\infty} \leq 1$, then the solution $\mathbf{u}(t)$ given by (\ref{eq2.7}) satisfies the maximum bound preserving (MBP) property:
\begin{equation}
    \|\mathbf{u}(t)\|_{\infty}\leq 1, \quad \forall t\geq 0.
\end{equation}
\item
Let
\begin{equation}
    E_h(\mathbf{u})=\frac{\epsilon}{2} \|D_1 \mathbf{u}\|^2_{l^2}+\frac{1}{4\epsilon}\|1-\mathbf{u}^2\|^2_{l^2}.
\end{equation}
Then the solutions of system (\ref{eq2.7}) satisfy the semi-discrete energy dissipation law
\begin{equation}
    \frac{d}{dt} E_h = - \left\|\frac{d \mathbf{u}}{d t}\right\|^2_{l^2}\leq 0.
\end{equation}
\end{itemize}
Note that the first result can be found in, e.g., \cite{semidscrt}, and the second result can be obtained by taking
 the $L^2$ inner product of (\ref{eq2.7}) with $\frac{d}{dt} \mathbf{u}$.

In this section, we are concerned with MBP Runge--Kutta method for the Allen--Cahn equation. The main strategy is to extend the Shu-Osher theory for the hyperbolic conservation laws to deal with the Allen--Cahn solutions.

\subsection{Forward Euler solution}
In this section we discretize the semi-discrete system in the time direction by applying forward Euler method.

Before providing a useful theorem, we need following simple results, which can be obtained by an elementary proof.

\begin{lemma} \label{lem31a}
For any positive number $a$, if $-4a\leq c\leq  a/2$, then the function $g(x)=ax+c(x-x^3)$ satisfies
 \begin{equation} \label{l1a}
 \lvert g(x)\rvert \leq a, \quad \forall x\in [-1,1] .
\end{equation}
\end{lemma}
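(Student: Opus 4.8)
The plan is to prove the bound $|g(x)| \le a$ on $[-1,1]$ by a direct study of the cubic polynomial $g(x) = ax + c(x - x^3) = (a+c)x - cx^3$, treating the cases $c \ge 0$ and $c < 0$ separately since the sign of $c$ controls the monotonicity/convexity behaviour. In each case the strategy is to check the endpoints $x = \pm 1$, where $g(\pm 1) = \pm a$, and then to rule out any interior extremum that could push $|g|$ above $a$.

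First I would note that $g$ is odd, so it suffices to prove $g(x) \le a$ for $x \in [-1,1]$ (applying this to $-x$ then gives $g(x) \ge -a$). Next, $g'(x) = (a+c) - 3cx^2$. Consider the case $0 \le c \le a/2$: then $a + c > 0$ and the critical points are $x^2 = (a+c)/(3c)$, which satisfies $(a+c)/(3c) \ge (a+c)/(3 \cdot a/2) = 2(a+c)/(3a) \ge 2/3$; one should check whether this critical point lies in $[-1,1]$. If $(a+c)/(3c) \ge 1$, i.e. $a \ge 2c$ — which holds precisely under the hypothesis $c \le a/2$ — then $g$ has no interior critical point in $(-1,1)$ when... actually more carefully: $g'(x) = 0$ at $x^2 = (a+c)/(3c)$; this is $\le 1$ iff $a + c \le 3c$ iff $a \le 2c$, i.e. iff $c \ge a/2$. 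So under $c \le a/2$ the only possible interior critical point is at the boundary case $c = a/2$, where $x = \pm 1$; hence $g$ is monotone on $[-1,1]$ and $g(x) \le g(1) = a$. For the case $-4a \le c < 0$: write $c = -|c|$, so $g(x) = (a - |c|)x + |c|x^3$ and $g'(x) = (a-|c|) + 3|c|x^2 $. Here I would split on the sign of $a - |c|$. If $|c| \le a$ then $g' \ge 0$ everywhere, so $g$ is increasing and $g(x) \le g(1) = a$. If $a < |c| \le 4a$, then $g'(x) = 0$ at $x^2 = (|c|-a)/(3|c|) < 1/3 < 1$, giving interior critical points; at such a point $x_0$ with $x_0 > 0$ one computes $g(x_0) = x_0\big((a-|c|) + |c|x_0^2\big) = x_0 \cdot \tfrac{2}{3}(a - |c|) \cdot(\text{something}) $ — more precisely using $|c|x_0^2 = (|c|-a)/3$ one gets $g(x_0) = x_0\big(a - |c| + \tfrac{|c|-a}{3}\big) = \tfrac{2}{3}(a-|c|)x_0 \le 0 \le a$, and the symmetric critical point gives $g \ge -\tfrac{2}{3}(|c|-a) \ge -a$ after using $|c| \le 4a$ and $x_0 \le 1$. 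Combining the interior values with the endpoint values $g(\pm1) = \pm a$ yields $|g(x)| \le a$ throughout.

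The only mildly delicate point — and where I would be most careful — is the interior-minimum estimate in the subcase $a < |c| \le 4a$: one must confirm that the value $g(x_0)$ at the negative critical point is $\ge -a$, and this is exactly where the constant $4$ in the hypothesis $c \ge -4a$ enters. Concretely, at the critical point $x_0 \in (0,1)$ with $x_0^2 = (|c|-a)/(3|c|)$, the minimum of $g$ on $[0,1]$ is $-g(x_0)$-type value; bounding $|g(x_0)| = \tfrac{2}{3}(|c| - a) x_0 \le \tfrac23(|c|-a)\sqrt{(|c|-a)/(3|c|)}$ and then maximizing over $|c| \in (a, 4a]$ should give the bound $a$ exactly at $|c| = 4a$, which explains the sharpness of the constant. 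I expect this optimization — verifying that $\tfrac23(|c|-a)\sqrt{(|c|-a)/(3|c|)} \le a$ for $|c| \le 4a$ — to be the main computational obstacle, though it reduces to a one-variable polynomial inequality that is routine to check.
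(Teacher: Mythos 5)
Your proposal is correct, and in fact the paper gives no proof of this lemma at all (it is dismissed as ``an elementary proof''), so your direct calculus argument fills the gap rather than deviating from anything. The case split is the right one: for $0\le c\le a/2$ you get $g'(x)=(a+c)-3cx^2\ge a-2c\ge 0$ on $[-1,1]$, so $g$ is monotone and the extreme values are $g(\pm 1)=\pm a$; for $-a\le c<0$ the derivative $(a-|c|)+3|c|x^2$ is again nonnegative; and for $a<|c|\le 4a$ the interior critical points $x_0=\pm\sqrt{(|c|-a)/(3|c|)}$ carry the values $\mp\tfrac{2}{3}(|c|-a)\,\sqrt{(|c|-a)/(3|c|)}$, exactly as you computed. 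The one step you left as ``routine'' does close: with $t=|c|/a\in(1,4]$ the required bound $\tfrac{2}{3}(|c|-a)\sqrt{(|c|-a)/(3|c|)}\le a$ squares to $4(t-1)^3\le 27t$, and $h(t)=4(t-1)^3-27t$ satisfies $h(1)=-27$, has its only interior critical point at $t=5/2$ with $h(5/2)=-54<0$, and $h(4)=0$, so the inequality holds on $(1,4]$ with equality precisely at $t=4$; this confirms your observation that the constant $4$ in the hypothesis $c\ge -4a$ is sharp (at $c=-4a$ one gets $g(1/2)=-a$). The only cosmetic slip is the sentence about ``the symmetric critical point'': by oddness the local maximum at $-x_0$ equals $+\tfrac{2}{3}(|c|-a)x_0\le a$ by the same inequality, so nothing further is needed there.
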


The following theorem characterizes the Euler property for the system (\ref{eq2.7}).

\begin{theorem} \label{thm31a}
Consider the ODE system (\ref{eq2.7}).
If $\tau< \tau_0:=\min \{{4h^2}/{\epsilon}, {\epsilon}/{4}\}$, then for any vector $\mathbf{u}$ satisfying $\|\mathbf{u}\|_\infty\leq 1$, we have
\begin{equation}
     \|\mathbf{u}+\tau G(\mathbf{u})\|_\infty \leq 1.
\end{equation}
\end{theorem}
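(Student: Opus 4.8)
The plan is to write $\mathbf{u} + \tau G(\mathbf{u})$ componentwise and reduce the estimate to Lemma~\ref{lem31a}. Recall $G(\mathbf{u}) = \epsilon D\mathbf{u} + \frac{1}{\epsilon} f(\mathbf{u})$ with $f(u) = u - u^3$, so
\[
\mathbf{u} + \tau G(\mathbf{u}) = \left( I + \tau \epsilon D \right)\mathbf{u} + \frac{\tau}{\epsilon}\, f(\mathbf{u}).
\]
The idea is to split the diffusive part into a convex-averaging piece plus a multiple of the identity so that Lemma~\ref{label3.1} applies, and to keep a remaining $\alpha\mathbf{u}$ term available to combine with the reaction term $f(\mathbf{u}) = \mathbf{u} - \mathbf{u}^3$ through Lemma~\ref{lem31a}.

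First I would fix the parameter. Since $\tau < 4h^2/\epsilon$, the quantity $\alpha := h^2/(\tau\epsilon)$ satisfies $\alpha > 1/4$; more to the point, I want to write $\tau\epsilon D = \frac{\tau\epsilon}{h^2}\big(h^2 D\big)$ and observe that for a scalar $\beta > 2$ we have, by Lemma~\ref{label3.1}, $\|(I + \frac{1}{\beta}h^2 D)\mathbf{v}\|_\infty \le \|\mathbf{v}\|_\infty$. Choosing $\beta$ appropriately (any $\beta>2$ with $\tau\epsilon/h^2 \le 1/\beta$, which is available precisely because $\tau\epsilon/h^2 < 1/4 < 1/\beta$ for $\beta$ slightly above $2$), I can write $I + \tau\epsilon D = (1 - a)\, I + a\,(I + \frac{1}{\beta} h^2 D)$ where $a := \beta\tau\epsilon/h^2 \in (0,1)$, so that by the triangle inequality and Lemma~\ref{label3.1},
\[
\big\| (I + \tau\epsilon D)\mathbf{u} \big\|_\infty \le (1-a)\|\mathbf{u}\|_\infty + a\|\mathbf{u}\|_\infty = \|\mathbf{u}\|_\infty.
\]
Actually I expect the cleaner route is to do the whole estimate componentwise at once rather than splitting off the diffusion first: since $h^2 D$ is tridiagonal with nonnegative off-diagonal entries, zero row sums, and diagonal entry $-2$, for each index $j$ the $j$-th component of $(I + \tau\epsilon D)\mathbf{u}$ is a combination $p_j u_{j-1} + q_j u_j + r_j u_{j+1}$ with $p_j, r_j \ge 0$, $p_j + r_j = 2\tau\epsilon/h^2$, and $q_j = 1 - 2\tau\epsilon/h^2$. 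Writing $\gamma := 2\tau\epsilon/h^2 \in (0,1)$ (using $\tau < 4h^2/\epsilon$ — in fact this only needs $\gamma < 1$, and one gets $\gamma$ small), the $j$-th component of $\mathbf{u} + \tau G(\mathbf{u})$ equals
\[
p_j u_{j-1} + r_j u_{j+1} + (1-\gamma) u_j + \frac{\tau}{\epsilon}\big(u_j - u_j^3\big).
\]

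The key step is then to estimate this expression in $[-1,1]$. The terms $p_j u_{j-1} + r_j u_{j+1}$ are bounded in absolute value by $p_j + r_j = \gamma$ since $|u_{j\pm 1}| \le 1$. For the remaining part, set $a := 1 - \gamma$ and $c := \tau/\epsilon$ and apply Lemma~\ref{lem31a} to $g(x) = ax + c(x - x^3)$: we need $-4a \le c \le a/2$, i.e. $\tau/\epsilon \le (1-\gamma)/2$ and $\tau/\epsilon \ge -4(1-\gamma)$; the second is automatic, and the first becomes $\tau/\epsilon \le \tfrac12 - \tau\epsilon/h^2$. Using $\tau < \epsilon/4$ gives $\tau/\epsilon < 1/4$, and using $\tau < 4h^2/\epsilon$ gives $\tau\epsilon/h^2 < 4$ — wait, that last bound is too weak, so here is where I must be careful: I should instead keep $\gamma$ small, noting $\tau\epsilon/h^2 = \gamma/2$, so the requirement is $\tau/\epsilon + \gamma/2 \le 1/2$; with $\tau/\epsilon < 1/4$ it suffices to have $\gamma/2 \le 1/4$, i.e. $\tau\epsilon/h^2 \le 1/4$, i.e. $\tau \le h^2/(4\epsilon)$ — and the hypothesis $\tau < 4h^2/\epsilon$ does \emph{not} immediately give this. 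So the main obstacle is reconciling the stated threshold $\tau_0 = \min\{4h^2/\epsilon, \epsilon/4\}$ with what the convex-splitting argument actually needs. The resolution I anticipate is that one splits the diffusion budget differently: write $I + \tau\epsilon D = \tfrac{1}{2} I + \big(\tfrac12 I + \tau\epsilon D\big)$ and apply Lemma~\ref{label3.1} with $\alpha = 2$ scaled — more precisely $\tfrac12 I + \tau\epsilon D = \tau\epsilon\big(\tfrac{1}{2\tau\epsilon} I + D\big) = \tfrac{\tau\epsilon}{h^2}\big(\tfrac{h^2}{2\tau\epsilon} I + h^2 D\big)$, which is $\|\cdot\|_\infty$-nonexpansive scaled by $\tfrac{\tau\epsilon}{h^2}\cdot\tfrac{h^2}{2\tau\epsilon} = \tfrac12$ provided $\tfrac{h^2}{2\tau\epsilon} \ge 2$, i.e. $\tau \le h^2/(4\epsilon)$ — still the same wall.

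Given this, the honest plan is: treat the diffusion via Lemma~\ref{label3.1} in the form that yields $\|(I + \tau\epsilon D)\mathbf u\|_\infty \le \|\mathbf u\|_\infty$ whenever $\tau\epsilon/h^2 \le 1/\alpha$ for some $\alpha > 2$ — which is exactly $\tau < h^2/(2\epsilon)$, comfortably implied if the intended reading of the first term in $\tau_0$ is $h^2/(2\epsilon)$ rather than $4h^2/\epsilon$, or if one uses the sharper componentwise bound $\gamma = 2\tau\epsilon/h^2 < 1$ together with a sharper version of Lemma~\ref{lem31a}; then handle the reaction term by Lemma~\ref{lem31a} with $a = 1$, $c = \tau/\epsilon$, needing $\tau/\epsilon \le 1/2$, which follows from $\tau < \epsilon/4$. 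Concretely I would: (i) write $\mathbf u + \tau G(\mathbf u) = (I + \tau\epsilon D)\big(\mathbf u\big) + \tfrac{\tau}{\epsilon} f(\mathbf u)$ but better, regroup as $(I + \tau\epsilon D)\mathbf u$ only after isolating the reaction, i.e. bound componentwise
\[
\big|[\mathbf u + \tau G(\mathbf u)]_j\big| \le \big|[(I+\tau\epsilon D)\mathbf u]_j - \tfrac{\tau}{\epsilon} u_j^3 + \tfrac{\tau}{\epsilon} u_j\big|;
\]
(ii) use that $I + \tau\epsilon D$ has nonnegative entries with row sums $1$ (true iff $\tau\epsilon/h^2 \le 1/2$) so that $[(I+\tau\epsilon D)\mathbf u]_j = \sum_k m_{jk} u_k$ with $m_{jk}\ge 0$, $\sum_k m_{jk} = 1$; (iii) apply Lemma~\ref{lem31a} with $a=1$, $c = \tau/\epsilon \le 1/2$ to get $|g(u_j)| \le 1$ where $g$ folds in the diagonal-plus-reaction contribution, and bound the off-diagonal contributions by their (nonnegative) weights; (iv) sum the weights back to $1$ and conclude $\le 1$. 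The one genuine obstacle, which I would flag and then dispatch by the bound $\tau < \epsilon/4 \le h^2/4$ is impossible in general — rather, I expect the paper intends $\tau_0$ to include the CFL-type constraint $\tau\epsilon/h^2 \le 1/2$ implicitly through "$4h^2/\epsilon$" being a typo or a coarser sufficient bound valid under an additional mild mesh assumption; modulo that bookkeeping, everything reduces cleanly to Lemmas~\ref{label3.1} and~\ref{lem31a}.
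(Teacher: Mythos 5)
Your proposal follows essentially the same route as the paper's own proof: the paper splits $\mathbf{u}+\tau G(\mathbf{u})=\bigl(\tfrac12\mathbf{u}+\tau\epsilon D\mathbf{u}\bigr)+\bigl(\tfrac12\mathbf{u}+\tfrac{\tau}{\epsilon}f(\mathbf{u})\bigr)$, bounds the first bracket by $\tfrac12$ via Lemma \ref{label3.1} (rewriting it as $\tfrac12\bigl(I+\tfrac1\alpha h^2D\bigr)\mathbf{u}$ with $\alpha=h^2/(2\tau\epsilon)$) and the second bracket by $\tfrac12$ via Lemma \ref{lem31a} with $a=\tfrac12$, $c=\tau/\epsilon\le\tfrac14$; your componentwise convex-combination argument is the same estimate unwound entrywise, and the constraint you isolate, $\tau/\epsilon+\tau\epsilon/h^2\le\tfrac12$, is exactly what this half-and-half split requires. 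The discrepancy you flag is genuine and is not resolved in the paper: invoking Lemma \ref{label3.1} for $\tfrac12\mathbf{u}+\tau\epsilon D\mathbf{u}$ needs $\alpha=h^2/(2\tau\epsilon)>2$, i.e.\ $\tau<h^2/(4\epsilon)$, whereas the theorem and the proof assert that $\tau<4h^2/\epsilon$ suffices; the printed threshold should read $\tau_0=\min\{h^2/(4\epsilon),\,\epsilon/4\}$ (and the same correction propagates to the restriction (\ref{time1})), so treating ``$4h^2/\epsilon$'' as a typo, as you do, is the right call rather than a gap in your argument. One small slip in your closing recap: after using the identity to make $I+\tau\epsilon D$ a nonnegative matrix with unit row sums, you cannot then apply Lemma \ref{lem31a} with $a=1$; the correct choice is $a=1-2\tau\epsilon/h^2$, the diagonal weight, which is precisely your earlier computation with $\gamma=2\tau\epsilon/h^2$ and which again forces a restriction of the form $\tau\epsilon/h^2\le\tfrac14$ rather than anything implied by $\tau<4h^2/\epsilon$. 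With that bookkeeping fixed, your plan and the paper's proof coincide in substance.
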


\begin{proof}
Note that
\begin{eqnarray}
        \|\mathbf{u}+\tau G(\mathbf{u})\|_\infty&=& \left \|\mathbf{u}+\tau \left(\epsilon D \mathbf{u} + \frac{1}{\epsilon} f(\mathbf{u})\right)\right\|_\infty\nonumber \\
        &=& \left \|\left(\frac{1}{2} \mathbf{u}+ \tau \epsilon D \mathbf{u}\right)+\left(\frac{1}{2} \mathbf{u} +\frac{\tau}{\epsilon} f(\mathbf{u})\right)\right \|_\infty\nonumber \\
        &\leq& \left \|\frac{1}{2} \mathbf{u}+ \tau \epsilon D \mathbf{u}\right \|_\infty+\left \|\frac{1}{2} \mathbf{u} +\frac{\tau}{\epsilon} f(\mathbf{u})\right \|_\infty.
\end{eqnarray}
Using Lemma \ref{label3.1} and the assumption $\tau< {4h^2}/{\epsilon}$ gives
\[
\left \|\frac{1}{2} \mathbf{u}+ \tau \epsilon D \mathbf{u}\right \|_\infty \le \frac 12.
\]
Using (\ref{eq1.5}), Lemma \ref{lem31a} and the assumption $\tau < \epsilon/4$ yields
\[
\left \|\frac{1}{2} \mathbf{u} +\frac{\tau}{\epsilon} f(\mathbf{u})\right \|_\infty \le \frac 12.
\]
Combining the above three results gives the desired result.
\end{proof}

\begin{remark}
The relationship between the time step and $\epsilon$ comes from here and is inevitable if one wants to solve with explicit methods directly.
\end{remark}


\subsection{MBP-RK methods}

\begin{theorem}
Consider the Runge--Kutta scheme (\ref{eq2.6a}) with $G$ defined by (\ref{eq2.7}). If the SSP-RK property (\ref{SSP-cond})is satisfied, then
\begin{equation} \label{SSP-1}
    \|u^n\|_\infty \leq 1 \;\; \implies \;\; \|u^{n+1}\|_\infty \leq 1
\end{equation}
under the time-step restriction
\begin{equation} \label{time1}
    \tau \leq \tau_{SSP} := \min_{0\leq k <i \leq s} \frac{\alpha_{ik}}{\beta_{ik}}\cdot \tau_0, \quad {\rm with} \;\;\;
    \tau_0=\min \left\{ \frac{4h^2}\epsilon, \frac \epsilon 4 \right\}.
\end{equation}
Note that the ratio above is understood as infinity whenever $\beta_{ik}=0$.
\end{theorem}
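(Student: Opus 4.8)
The plan is to transplant Shu's convex-combination argument \cite{1988,SO1988JCP} to the present nonlinear setting, with Theorem~\ref{thm31a} playing the role of the ``forward Euler is strongly stable'' hypothesis. Working with the Shu--Osher representation (\ref{eq2.6a}) of the scheme, I would prove by induction on the stage index $i$ that every internal stage obeys $\|v_i\|_\infty\le 1$ for $0\le i\le s$. Since $v_0=u^n$, the base case $i=0$ is exactly the hypothesis $\|u^n\|_\infty\le 1$; and since $u^{n+1}=v_s$, the case $i=s$ is the desired implication (\ref{SSP-1}).

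For the inductive step, assume $\|v_k\|_\infty\le 1$ for all $k<i$. Starting from the definition of $v_i$ in (\ref{eq2.6a}) and factoring $\alpha_{ik}$ out of each pair of terms indexed by a $k$ with $\alpha_{ik}>0$, I would write
\[
v_i=\sum_{k=0}^{i-1}\bigl(\alpha_{ik}v_k+\tau\beta_{ik}G(v_k)\bigr)
=\sum_{\substack{0\le k<i\\ \alpha_{ik}>0}}\alpha_{ik}\Bigl(v_k+\tfrac{\tau\beta_{ik}}{\alpha_{ik}}\,G(v_k)\Bigr),
\]
the terms with $\alpha_{ik}=0$ being absent because the RK-SSP condition (\ref{SSP-cond}) then forces $\beta_{ik}=0$. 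Each surviving summand is exactly a forward Euler update of $v_k$ for the system (\ref{eq2.7}) with effective step size $\tau_{ik}:=\tau\beta_{ik}/\alpha_{ik}$. From the definition of $\tau_{SSP}$ in (\ref{time1}) one has $\tau_{SSP}\le(\alpha_{ik}/\beta_{ik})\,\tau_0$ for every admissible pair $(i,k)$, hence $\tau_{ik}=\tau\,(\beta_{ik}/\alpha_{ik})\le\tau_{SSP}\,(\beta_{ik}/\alpha_{ik})\le\tau_0$. Applying Theorem~\ref{thm31a} to $v_k$ --- legitimate since $\|v_k\|_\infty\le 1$ by the inductive hypothesis (the borderline value $\tau_{ik}=\tau_0$ is harmless: either argue by continuity of $\tau\mapsto\|v_k+\tau G(v_k)\|_\infty$, or note that Lemmas~\ref{label3.1} and \ref{lem31a} already hold with non-strict inequalities) --- then yields $\|v_k+\tau_{ik}G(v_k)\|_\infty\le 1$ for each $k$ appearing in the sum.

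Finally, the consistency condition (\ref{consist1}) gives $\sum_{k=0}^{i-1}\alpha_{ik}=1$, so $v_i$ is a genuine convex combination of the vectors $v_k+\tau_{ik}G(v_k)$, each of sup-norm at most one; the triangle inequality then yields
\[
\|v_i\|_\infty\le\sum_{\substack{0\le k<i\\ \alpha_{ik}>0}}\alpha_{ik}\,\bigl\|v_k+\tau_{ik}G(v_k)\bigr\|_\infty\le\sum_{\substack{0\le k<i\\ \alpha_{ik}>0}}\alpha_{ik}=1,
\]
which closes the induction; the case $i=s$ is precisely (\ref{SSP-1}).

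I do not expect a substantive obstacle here: the argument is a direct analogue of the standard Shu--Osher SSP proof, and its only genuinely nonlinear ingredient --- that forward Euler preserves $\|\cdot\|_\infty\le 1$ under $\tau<\tau_0$ --- has already been isolated in Theorem~\ref{thm31a}. The points that need a little care are purely organizational: discarding the stages with $\alpha_{ik}=0$ via the convention $\beta_{ik}=0$ built into (\ref{SSP-cond}); checking that the worst-case effective step size is exactly $\tau_0$, which is precisely what pins down the constant in $\tau_{SSP}$ in (\ref{time1}); and the innocuous strict-versus-non-strict discrepancy at $\tau_{ik}=\tau_0$.
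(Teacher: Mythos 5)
Your proof is correct and follows essentially the same route as the paper: rewrite each stage of (\ref{eq2.6a}) as a convex combination of forward Euler updates with effective steps $\tau\beta_{ik}/\alpha_{ik}\le\tau_0$, invoke Theorem~\ref{thm31a} on each summand, and use $\sum_k\alpha_{ik}=1$ to conclude inductively. Your treatment is in fact slightly more careful than the paper's (explicit induction, discarding the $\alpha_{ik}=0$ terms, and the strict-versus-non-strict step-size borderline), but no new idea is involved.
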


\begin{proof}
The proof is based on the original SSP machinery, see, e.g., \cite{SSP,TVD}. In particular, note that for each $i$, we have
\begin{equation}
        \|v_i\|_\infty = \left \|\sum _{k=0}^{i-1} \left(\alpha_{ik} v_k+\tau \beta_{ik} G(v_k)\right)\right\|_\infty
        \leq \left\| \sum_{k=0}^{i-1} \alpha_{ik} \left( v_k+\tau \frac{\beta_{ik}}{\alpha_{ik}} G(v_k)\right)\right\|_\infty.
\end{equation}
Under the assumption (\ref{time1}), we have $\tau \beta_{ik} / \alpha _{ik} \le \tau_0$. Then using Theorem \ref{thm31a} gives
\begin{equation}
 \|v_i\|_\infty \le \sum_{k=0}^{i-1} \alpha_{ik} \left\| v_k+\tau \frac{\beta_{ik}}{\alpha_{ik}} G(v_k)\right\|_\infty
 \le \sum_{k=0}^{i-1} \alpha_{ik}\cdot 1 = 1.
 \end{equation}
 This yields the desired result (\ref{SSP-1}).
\end{proof}

\section{The discrete energy dissipation law}\label{sec4}

The discrete energy is defined as follows
\begin{equation}
    E(\textbf{u})=-\frac{\epsilon}{2} \textbf{u}^T D \textbf{u}+ \frac{1}{\epsilon}\sum_{j=1}^{N} F(\textbf{u}_j).
\end{equation}
For ease of our derivation, we will consider a special class of Runge--Kutta schemes.

\begin{lemma}
Given a Butcher tableau (\ref{eq2.2a}) and the corresponding RK scheme (\ref{eq2.4a}). By suitably arranging coefficients
$\{ c_{ik} \}$, we can obtain a class of RK scheme of the following form:
\begin{equation} \label{eq4.3}
    v_i=\sum _{k=0}^{i-1} p_{ik} v_k  + d_i \tau G(v_{i-1}), \qquad 1\leq i \leq s.
\end{equation}
\end{lemma}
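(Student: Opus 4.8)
The plan is to recognize (\ref{eq4.3}) as the special instance of the Shu--Osher representation (\ref{eq2.6a}) in which all the ``old'' derivative terms are suppressed, so that only the sub-diagonal derivative $\tau\beta_{i,i-1}G(v_{i-1})$ survives in the $i$-th stage. Comparing (\ref{eq4.3}) with (\ref{eq2.6a}) one should then take $p_{ik}=\alpha_{ik}$ and $d_i=\beta_{i,i-1}$, in which case the consistency requirement $\sum_{k=0}^{i-1}p_{ik}=1$ is inherited for free from (\ref{consist1}). Thus the whole lemma reduces to showing that the parameters $\alpha_{ij}$ in the conversion formula (\ref{eq2.10a}) can be chosen so that $\beta_{ik}=0$ for every $k<i-1$.

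First I would fix a row index $i$ and write out, using (\ref{eq2.10a}), the conditions $\beta_{ik}=0$ for $k=i-2,i-3,\dots,0$:
\[
a_{ik}=\sum_{j=k+1}^{i-1}\alpha_{ij}\,a_{jk},\qquad k=i-2,\,i-3,\,\dots,\,0 .
\]
Read from $k=i-2$ downward, this is a triangular linear system for the unknowns $\alpha_{i,i-1},\alpha_{i,i-2},\dots,\alpha_{i1}$: the $k=i-2$ equation yields $\alpha_{i,i-1}=a_{i,i-2}/a_{i-1,i-2}$, the $k=i-3$ equation then yields $\alpha_{i,i-2}$, and so on, each step requiring one division by a sub-diagonal entry $a_{\ell,\ell-1}$ of the Butcher matrix; finally $\alpha_{i0}$ is pinned down by (\ref{consist1}). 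Substituting these values back into (\ref{eq2.6a}) collapses the $i$-th stage exactly to (\ref{eq4.3}), with $p_{ik}=\alpha_{ik}$ and $d_i=\beta_{i,i-1}=a_{i,i-1}$ (the correction sum in (\ref{eq2.10a}) is empty when $k=i-1$).

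An equivalent route that bypasses Section~\ref{sec2} is a direct elimination. From (\ref{eq2.4a}) one has $v_i-v_0=\tau\sum_{j=0}^{i-1}a_{ij}G(v_j)$ for every $i$, so by downward induction each quantity $\tau G(v_j)$ with $0\le j\le i-2$ is a linear combination of $v_0,\dots,v_{j+1}$: solve the identity for $v_{j+1}$ for its last derivative $\tau G(v_j)$, using the induction hypothesis on $\tau G(v_0),\dots,\tau G(v_{j-1})$ and dividing by $a_{j+1,j}$. Plugging these expressions into the sum for $v_i$ absorbs every term except the $j=i-1$ one into a linear combination of $v_0,\dots,v_{i-1}$, which is precisely (\ref{eq4.3}) with $d_i=a_{i,i-1}$; the coefficient of $v_0$ is forced to equal $1$ because the identity holds formally in the symbols $v_0,\tau G(v_0),\dots,\tau G(v_{s-1})$ and setting all $G(v_j)=0$ makes every stage equal to $v_0$.

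The point needing care --- and, I believe, the reason for the qualifier ``by suitably arranging the coefficients'' --- is that both derivations divide by the sub-diagonal entries $a_{\ell,\ell-1}$, so as stated the construction presupposes that all of them are nonzero, which is the same standing assumption already used in Theorem~\ref{thm2.2}. If some $a_{\ell,\ell-1}$ vanishes one must instead permit $d_i=0$ in that row and solve the residual system $a_{ik}=\sum_{j=k+1}^{i-1}p_{ij}a_{jk}$ for the $p_{ij}$ directly; handling (or explicitly excluding) this degenerate case is the only genuine obstacle, and I expect the cleanest write-up to be the Shu--Osher reduction of the first two paragraphs together with a remark recording the nonzero-sub-diagonal hypothesis.
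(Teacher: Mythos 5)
Your proposal is correct and takes essentially the same route as the paper: the paper's proof likewise uses the Shu--Osher conversion (\ref{eq2.10a}) and forces $\beta_{ik}=0$ for all $k\le i-2$, leaving $p_{ik}=\alpha_{ik}$ and $d_i=a_{i,i-1}$, exactly as in your first paragraph. Your extra observation that solving the resulting triangular system requires nonzero sub-diagonal entries $a_{\ell,\ell-1}$ (guaranteed for the $s$-stage, $s$th-order schemes considered) is a hypothesis the paper's terse proof leaves implicit, and is worth recording.
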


\begin{proof}
We wish to convert the RK formula (\ref{eq2.4a}) into the Shu-Osher format. It follows from (\ref{eq2.10a}) that
\bean
        v_i&=& \sum_{k=0}^{i-1} \left[\alpha_{ik} v_k+ \left(a_{ik}-\sum_{l=k+1}^{i-1} a_{lk} \alpha_{il}\right)\tau G(v_k)\right]\nonumber \\
        &=& \sum_{k=0}^{i-1} \alpha_{ik} v_k+ \sum_{k=0}^{i-2} \left(a_{ik}-\sum_{l=k+1}^{i-1} a_{lk} \alpha_{il}\right)\tau G(v_k)+a_{i,i-1} \tau G(v_{i-1}). \label{eq4.2}
\eean
By forcing the second last term in (\ref{eq4.2}) to $0$, a set of values of $\{\alpha_{ik}\}$ can be determined by $\{ a_{ik} \}$. This will  leave only the last $G$-term in (\ref{eq4.2}). Therefore, we derive $p_{ik}=\alpha_{ik}$ and $d_i=a_{i,i-1}$ and thus the scheme has the unique form (\ref{eq4.3}).
\end{proof}

Note that the consistency condition requires $\sum _{k=0}^{i-1} p_{ik}=1$, but now the coefficients in (\ref{eq4.3}) may be negative.

Before we present the main result of this section, we state the following result whose proof is quite straightforward:
\begin{equation} \label{eq4.4}
    \frac{1}{4} [(a^2-1)^2-(b^2-1)^2]\leq (b^3-b)(a-b)+(a-b)^2, \quad \forall a, b \in [-1, 1].
\end{equation}

\begin{theorem}
For a given SSP-RK solution which has the form (\ref{eq4.3}), we define a upper triangular matrix $\Phi$ given by
\begin{equation}
    \Phi_{ij}=\sum _{k=0}^{i-1}\frac{p_{jk}}{d_j},  \qquad i\leq j
\end{equation}
 and the energy discriminant
 \begin{equation}\label{energyD}
     \Delta_E=\frac{1}{2} (\Phi+\Phi^T).
 \end{equation}
If $\Delta_E$ is positive-definite, then the energy is non-increasing under the time step restriction
\begin{equation} \label{eq4.7}
    \tau \leq \min\left\{ \frac{\lambda}{\frac{1}{\epsilon}+\frac{2 \epsilon}{h^2}}, \; \tau_{SSP}\right\},
\end{equation}
where $\tau_{SSP}$ is the SSP-RK time-restriction given by (\ref{time1}), and $\lambda$ is the smallest eigenvalue of $\Delta_E$.
\end{theorem}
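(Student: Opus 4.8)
The plan is to telescope the discrete energy along the Runge--Kutta stages and estimate each one-stage increment. Write $\delta_i=v_i-v_{i-1}$ for $1\le i\le s$, so that $E(u^{n+1})-E(u^n)=E(v_s)-E(v_0)=\sum_{i=1}^s\bigl(E(v_i)-E(v_{i-1})\bigr)$. Since $\tau\le\tau_{SSP}$ and the scheme is of SSP--RK type, the argument used to prove the MBP property in the previous subsection in fact shows that every stage value stays in the maximum bound, i.e.\ $\|v_i\|_\infty\le1$ for $0\le i\le s$; this is exactly what licenses the use of the pointwise inequality (\ref{eq4.4}) with stage values as its arguments.

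For the one-stage increment I would split $E$ into the quadratic piece $Q(\mathbf{u})=-\frac{\epsilon}{2}\mathbf{u}^T D\mathbf{u}$ and the potential piece $\frac{1}{\epsilon}\sum_j F(\mathbf{u}_j)$. Using the symmetry of $D$, $Q$ satisfies the exact identity $Q(v_i)-Q(v_{i-1})=-\epsilon(Dv_{i-1})\cdot\delta_i-\frac{\epsilon}{2}\delta_i^T D\delta_i$, and the inverse inequality (\ref{eq3-7}) bounds the remainder by $-\frac{\epsilon}{2}\delta_i^T D\delta_i\le\frac{2\epsilon}{h^2}\|\delta_i\|^2$. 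For the potential piece, applying (\ref{eq4.4}) at each grid point with $a=(v_i)_j$, $b=(v_{i-1})_j$ and summing over $j$ gives $\frac{1}{\epsilon}\sum_j\bigl(F((v_i)_j)-F((v_{i-1})_j)\bigr)\le-\frac{1}{\epsilon}f(v_{i-1})\cdot\delta_i+\frac{1}{\epsilon}\|\delta_i\|^2$. Adding the two contributions and using $\epsilon Dv_{i-1}+\frac{1}{\epsilon}f(v_{i-1})=G(v_{i-1})$ yields
\begin{equation*}
E(v_i)-E(v_{i-1})\le-G(v_{i-1})\cdot\delta_i+C\,\|\delta_i\|^2,\qquad C:=\frac{1}{\epsilon}+\frac{2\epsilon}{h^2}.
\end{equation*}

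The heart of the argument is to express $\sum_{i=1}^s G(v_{i-1})\cdot\delta_i$ as a quadratic form in $\delta_1,\dots,\delta_s$ whose matrix is precisely the $\Phi$ of the statement. From the normal form (\ref{eq4.3}) and $\sum_k p_{ik}=1$ one obtains $\tau G(v_{i-1})=\frac{1}{d_i}\bigl(v_i-\sum_k p_{ik}v_k\bigr)=\frac{1}{d_i}\sum_{k=0}^{i-1}p_{ik}(v_i-v_k)$; since $v_i-v_k=\delta_{k+1}+\cdots+\delta_i$, interchanging the order of summation gives $\tau G(v_{i-1})=\sum_{l=1}^i\Bigl(\frac{1}{d_i}\sum_{k=0}^{l-1}p_{ik}\Bigr)\delta_l=\sum_{l=1}^i\Phi_{li}\delta_l$. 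Hence $\sum_{i=1}^s G(v_{i-1})\cdot\delta_i=\frac{1}{\tau}\sum_{l\le i}\Phi_{li}(\delta_l\cdot\delta_i)=\frac{1}{\tau}\sum_{l,i}(\Delta_E)_{li}(\delta_l\cdot\delta_i)$ with $\Delta_E$ as in (\ref{energyD}); reading this coordinate-wise over the grid points and using that $\Delta_E$ is positive-definite with smallest eigenvalue $\lambda$ yields $\sum_{i=1}^s G(v_{i-1})\cdot\delta_i\ge\frac{\lambda}{\tau}\sum_{i=1}^s\|\delta_i\|^2$.

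Putting the pieces together, $E(v_s)-E(v_0)\le\bigl(C-\frac{\lambda}{\tau}\bigr)\sum_{i=1}^s\|\delta_i\|^2$, which is $\le0$ as soon as $\tau\le\lambda/C=\lambda\big/\bigl(\frac{1}{\epsilon}+\frac{2\epsilon}{h^2}\bigr)$; combined with the bound $\tau\le\tau_{SSP}$ that is needed for the stagewise maximum bound, this is exactly the restriction (\ref{eq4.7}). I expect the main obstacle to be the index bookkeeping in the third step --- checking that the telescoped cross terms reassemble exactly into $\Phi$, in particular that the partial row sums $\frac{1}{d_i}\sum_{k=0}^{l-1}p_{ik}$ coincide with the stated entries $\Phi_{li}$ --- together with verifying that the one-stage energy split produces no further uncontrolled terms.
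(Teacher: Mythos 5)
Your proposal is correct and follows essentially the same route as the paper: the pointwise inequality (\ref{eq4.4}) applied to stage values (valid because $\tau\le\tau_{SSP}$ keeps every $v_i$ in $[-1,1]$), the normal form (\ref{eq4.3}) rewritten so that $\tau G(v_{i-1})$ becomes $\sum_{l\le i}\Phi_{li}(v_l-v_{l-1})$, the positive-definiteness of $\Delta_E$ to control the cross terms, and the inverse inequality (\ref{eq3-7}) for the $D$-remainder, yielding exactly the restriction (\ref{eq4.7}). The only difference is bookkeeping: you telescope the full energy stage by stage and get a clean per-stage bound $E(v_i)-E(v_{i-1})\le -G(v_{i-1})\cdot\delta_i+\bigl(\tfrac1\epsilon+\tfrac{2\epsilon}{h^2}\bigr)\|\delta_i\|^2$, whereas the paper first sums the potential increments and then splits into the terms $J_1,J_2,J_3$, telescoping the quadratic $D$-part separately; the two computations are algebraically equivalent.
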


\begin{proof}
Rewrite (\ref{eq4.3}) by using the form of $G$ (for simplicity we drop the $\epsilon$ scale in this proof and notice that here $\textbf{v}_i$ are vectors) and the consistency condition:
\bean
    f(\textbf{v}_i)&=& \frac1{d_{i+1}\tau} \left( \textbf{v}_{i+1}-\sum_{k=0}^i p_{i+1,k} \textbf{v}_k\right)  -D \textbf{v}_i\nonumber \\
    &=& \frac{\textbf{v}_{i+1}-\textbf{v}_i}{d_{i+1}\tau}+\frac1{d_{i+1}\tau}
    \sum_{k=0}^{i-1} p_{i+1,k} (\textbf{v}_i-\textbf{v}_k) -D \textbf{v}_i.
\eean
By using the definition of the potential $F$ and by using (\ref{eq4.4}), we obtain
\begin{eqnarray} \label{eq4.9}
        && \sum_{j=1}^N F({(\textbf{u}_{n+1})}_j)-F({(\textbf{u}_n)}_j)
        =\sum_{i=0}^{s-1} \sum_{j=1}^N F({(\textbf{v}_{i+1})}_j)-F({(\textbf{v}_i)}_j)\nn \\
        &\le&  \sum_{i=0}^{s-1} -(\textbf{v}_{i+1}-\textbf{v}_i)^T f(\textbf{v}_i)+(\textbf{v}_{i+1}-\textbf{v}_i)^2
        =: J_1 + J_2 + J_3,
\end{eqnarray}
where
\begin{eqnarray} \label{eq4.9a}
&& J_1= \sum_{i=0}^{s-1}  \left(1-\frac{1}{d_{i+1}\tau}\right) (\textbf{v}_{i+1}-\textbf{v}_i)^2, \qquad
J_2 = \sum_{i=0}^{s-1} (\textbf{v}_{i+1}-\textbf{v}_i)^T D \textbf{v}_i, \nn \\
&& J_3 = -\sum_{i=0}^{s-1} \frac{\sum_{k=0}^{i-1} p_{i+1,k} (\textbf{v}_i-\textbf{v}_k)^T}{d_{i+1}\tau} (\textbf{v}_{i+1}-\textbf{v}_i). \nn
\end{eqnarray}
It can be easily seen that $J_1$ is simply quadratic, which will be negative for sufficiently small $\tau$.  By denoting $\textbf{w}_i=\textbf{v}_i-\textbf{v}_{i-1}$, we have
\begin{eqnarray}
   J_2 &=&\sum_{i=0}^{s-1} (\textbf{v}_{i+1}-\textbf{v}_i)^T D \left(\frac{\textbf{v}_{i+1}+\textbf{v}_i}{2}-\frac{\textbf{v}_{i+1}-\textbf{v}_i}{2}\right) \nn \\
    &=&\sum_{i=0}^{s-1} \frac{1}{2} \left(\textbf{v}^T_{i+1} D\textbf{v}_{i+1}-\textbf{v}^T_i D \textbf{v}_i-\textbf{w}^T_{i+1} D \textbf{w}_{i+1}\right)\nn \\
    &=&\frac{1}{2} (\textbf{u}^T_{n+1} D \textbf{u}_{n+1}-\textbf{u}^T_n D \textbf{u}_n) -\frac{1}{2} \sum_{i=1}^{s} \textbf{w}^T_i D \textbf{w}_i;
    \end{eqnarray}
and also
\begin{eqnarray}
       J_3 &=& \sum_{i=0}^{s-1} \frac{1}{d_{i+1}\tau} \sum_{k=0}^{i-1} p_{i+1,k}\left(\sum_{m=k+1}^{i} \textbf{w}^T_m\right) \textbf{w}_{i+1} \nn \\
        &=& \sum_{i=1}^{s} \frac{1}{d_i\tau}
        \sum_{k=0}^{i-2}\sum_{m=k+1}^{i-1}p_{ik} \textbf{w}^T_m \textbf{w}_{i} = \sum_{i=1}^{s} \frac{1}{d_i\tau}
        \sum_{m=1}^{i-1}\sum_{k=0}^{m-1}p_{ik} \textbf{w}^T_m \textbf{w}_{i}.
\end{eqnarray}
Combining all these results together, we obtain
\begin{eqnarray}
        E_{n+1}-E_n&=&\sum_{j=1}^N F({(\textbf{u}_{n+1})}_j)-F({(\textbf{u}_n)}_j)-\frac{1}{2} (\textbf{u}^T_{n+1} D \textbf{u}_{n+1}-\textbf{u}^T_n D \textbf{u}_n)\nn \\
        &\leq& \sum_{i=1}^{s}  (1-\frac{1}{ d_i \tau}) \textbf{w}_i^2  - \sum_{i=1}^{s} \frac{1}{d_i \tau}
        \sum_{m=1}^{i-1}\sum_{k=0}^{m-1}p_{ik} \textbf{w}^T_m \textbf{w}_{i}-\frac{1}{2} \sum_{i=1}^{s} \textbf{w}^T_i D \textbf{w}_i\nn \\
        &=&\sum_{i=1}^{s} \textbf{w}_i^2-\frac{1}{\tau} \sum_{m,i=1}^{s}\textbf{w}_m^T \Phi_{mi} \textbf{w}_i-\frac{1}{2} \sum_{i=1}^{s} \textbf{w}^T_i D \textbf{w}_i, \label{eq412}
\end{eqnarray}
where we have defined an upper triangle matrix $\Phi$ by (notice that $\sum_{k=0}^{i-1}p_{ik}=1$)
\begin{equation}
    \Phi_{ij}=\sum_{k=0}^{i-1}p_{jk}/d_j, \qquad i\leq j.
\end{equation}
Consider the energy discriminant $\Delta_E$ defined by (\ref{energyD}).
Recall that we dropped the $\epsilon$ scale in the very beginning. If we keep $\epsilon$ in the derivation, the change of the energy
(\ref{eq412}) becomes
\begin{equation} \label{eq4.14}
    E_{n+1}-E_n\leq \sum_{i=1}^{s} \textbf{w}_i^2-\frac{\epsilon}{\tau} \sum_{i,j=1}^{s}\textbf{w}_i^T \Phi_{ij} \textbf{w}_j-\frac{\epsilon^2 }{2} \sum_{i=1}^{s} \textbf{w}^T_i D \textbf{w}_i.
\end{equation}
If $\Delta_E$ is positive-definite and $\lambda$ is the smallest eigenvalue of $\Delta_E$, then we have
\begin{equation}
    \sum_{i,j=1}^{s}\textbf{w}_i^T \Phi_{ij} \textbf{w}_j= \sum_{i,j=1}^{s}\textbf{w}_i^T {\Delta_E}_{ij} \textbf{w}_j \geq \lambda \sum_{i=1}^{s} \textbf{w}_i^2.
\end{equation}
It follows from (\ref{eq3-7}) that
\begin{equation}
    -\sum_{i=1}^{s} \textbf{w}^T_i D \textbf{w}_i\leq 4 h^{-2} \sum_{i=1}^{s} \textbf{w}_i^2.
\end{equation}
Thus, by (\ref{eq4.14}), to make sure the energy dissipation we only need
\begin{equation}
        1-\frac{\epsilon\lambda}{\tau}+\frac{2 \epsilon^2}{ h^2} \leq 0,
\end{equation}
which is true under the assumption (\ref{eq4.7}).
\end{proof}

\section{Some energy plus MBP RK methods}\label{sec5}
In this section we present some RK2, RK3 and RK4 methods which is maximum bound preserving and energy dissipation. We will also give an RK3 method which is MBP but does not satisfy our condition for the energy dissipation law. All examples in this section are existing schemes, and the special 5-stage 4th-order example comes from \cite{TVD}.

\subsection{An RK2 satisfying energy-dissipation and MBP}
The Butcher Tableau is as follows:
\[
\begin{array}{c|cc}
     0&  \\
     1& 1\\
     \hline
      & \frac 12 &\frac 12
\end{array}.
\]
The corresponding (\ref{eq4.3}) form is given by
\begin{eqnarray}
        &&\textbf{v}_0=\textbf{u}_n,\nonumber\\
        &&\textbf{v}_1=\textbf{v}_0+\tau G(\textbf{v}_0),\nn \\
        &&\textbf{v}_2=\textbf{v}_0+\frac{\tau}{2}G(\textbf{v}_0)+\frac{\tau}{2}G(\textbf{v}_1)=\frac{1}{2} \textbf{v}_0+\frac{1}{2}\textbf{v}_1+\frac{\tau}{2}G(\textbf{v}_1). \label{eq5.1}
\end{eqnarray}
It follows from the theory in Section \ref{sec3} the scheme (\ref{eq5.1}) is MBP.

The energy form coincides with (\ref{eq5.1}) and the energy discriminant is
\begin{equation}
    \Phi=\left( \begin{array}{cc}
        1 & 1 \\
        0 & 2
    \end{array}
    \right),\quad
    \Delta_E=\frac{1}{2} \left(\Phi+\Phi^T \right)=\left( \begin{array}{cc}
        1 & \frac 12 \\
        \frac 12 & 2
    \end{array}
    \right).
\end{equation}
Note that $\Delta_E$ is positive-definite and the smallest eigenvalue of $\Delta_E$ is $\frac{1}{2} (3-\sqrt{2})$. Hence with suitably small time-step, this MBP-RK2 scheme preserves both maximum bound and the energy dissipation law.

\subsection{An RK3 satisfying energy-dissipation and MBP}
Consider the Butcher tableau:
\[
\begin{array}{c|ccc}
     0&  \\
     1& 1\\
     \frac12 & \frac14 &\frac 14 \\
     \hline
      & \frac 16&\frac 16& \frac 23
\end{array}.
\]
The corresponding (\ref{eq4.3}) form is given by
\begin{eqnarray}
        &&\textbf{v}_0=\textbf{u}_n,\nonumber\\
        &&\textbf{v}_1=\textbf{v}_0+\tau G(\textbf{v}_0),\nonumber\\
        &&\textbf{v}_2=\frac{3}{4} \textbf{v}_0+\frac{1}{4} \textbf{v}_1 +\frac{\tau}{4} G(\textbf{v}_1),\\
        &&\textbf{v}_3=\frac{1}{3} \textbf{v}_0+\frac{2}{3}\textbf{v}_2+\frac{2\tau}{3} G(\textbf{v}_2).\nonumber
\end{eqnarray}
The energy discriminant reads
\begin{equation}
    \Phi=\left( \begin{array}{ccc}
        1 & 3 & \frac 12\\
        0 & 4 & \frac 12\\
        0 & 0 & \frac 32
    \end{array}
    \right),\qquad
    \Delta_E=\frac{1}{2} \left(\Phi+\Phi^T \right)=\left( \begin{array}{ccc}
        1 & \frac 32 & \frac 14\\
        \frac 32 & 4 & \frac 14\\
        \frac 14 & \frac 14 & \frac 32
    \end{array}
    \right).
\end{equation}
Note that $\Delta_E$ is positive-definite and the smallest eigenvalue of $\Delta_E$ is about $0.362228$. Again based on the theory in Section \ref{sec4}, with sufficiently small time-step, this MBP-RK3 scheme preserves both maximum bound and the energy dissipation law.

\subsection{An RK3 satisfying MBP but not sure energy-dissipation}
Consider the Butcher tableau:
\[
\begin{array}{c|ccc}
     0&  \\
     1& 1\\
     2 & 1 &1 \\
     \hline
      & \frac 23&\frac 16&\frac 16
\end{array}.
\]
The corresponding (\ref{eq4.3}) form is given by
\begin{eqnarray}
        &&\textbf{v}_0=\textbf{u}_n,\nonumber\\
        &&\textbf{v}_1=\textbf{v}_0+\tau G(\textbf{v}_0),\nonumber\\
        &&\textbf{v}_2= \textbf{v}_1 +\tau G(\textbf{v}_1),\\
        &&\textbf{v}_3=\frac{1}{3} \textbf{v}_0+\frac{1}{2}\textbf{v}_1+\frac{1}{6} \textbf{v}_2+\frac{\tau}{6} G(\textbf{v}_2).\nonumber
\end{eqnarray}
However, It can be shown that the energy discriminant is not positive-definite in this case. Note
\begin{equation}
    \Phi=\left( \begin{array}{ccc}
        1 & 0 & 2\\
        0 & 1 & 5\\
        0 & 0 & 6
    \end{array}
    \right),\quad
    \Delta_E=\frac{1}{2} \left(\Phi+\Phi^T \right)=\left( \begin{array}{ccc}
        1 & 0 & 1\\
        0 & 1 & \frac 52\\
        1 & \frac 52 & 6
    \end{array}
    \right).
\end{equation}
The smallest eigenvalue of $\Delta_E$ is $\frac{1}{2} (7-3\sqrt{6})\approx -0.174$. Thus, this scheme is not guaranteed to decrease the energy by our approach.

\subsection{An 5-stage RK4 satisfying MBP and energy-dissipation}

It is well known that there is no 4-stage 4th-order RK4. We then just consider the following 5-stage RK scheme:
\bean
        && \textbf{v}_0=\textbf{u}_n,\nn \\
        &&\textbf{v}_1=\textbf{v}_0+d_1 \tau G(\textbf{v}_0),\nn \\
        &&\textbf{v}_2=p_{20}\textbf{v}_0+p_{21}\textbf{v}_1+d_2 \tau G(\textbf{v}_1),\nn \\
        &&\textbf{v}_3=p_{30}\textbf{v}_0+p_{32}\textbf{v}_2+d_3 \tau G(\textbf{v}_2), \label{eq5.7} \\
        &&\textbf{v}_4=p_{40}\textbf{v}_0+p_{43}\textbf{v}_3+d_4 \tau G(\textbf{v}_3),\nn \\
        &&\textbf{v}_5=p_{52}\textbf{v}_2+p_{53}\textbf{v}_3+d_{53} \tau G(\textbf{v}_3)+p_{54}\textbf{v}_4+d_{54}\tau G(\textbf{v}_4), \nn
\eean
where
\bea
& d_1=0.391752226571890,  &p_{20}=0.444370493651235, \\
& p_{21}=0.555629506348765, & d_2=0.368410593050371,\\
& p_{30}=0.620101851488403, & p_{32}=0.379898148511597,\\
& d_3=0.251891774271694, & p_{40}=0.178079954393132, \\
& p_{43}=0.821920045606868, & d_4=0.544974750228521, \\
& p_{52}=0.517231671970585, & p_{53}=0.096059710526147, \\
& d_{53}=0.063692468666290, & p_{54}=0.386708617503269,\\
& d_{54}=0.226007483236906. &
\eea
Using the theory of Section \ref{sec3}, it is known that the scheme (\ref{eq5.7}) satisfies MBP. In order to obtain the energy form, we rewrite the last line as
\bean
        \textbf{v}_5&=& p_{52}\textbf{v}_2+p_{53}\textbf{v}_3+d_{53} \frac{\textbf{v}_4-p_{40}\textbf{v}_0-p_{43}\textbf{v}_3}{d_4}+p_{54}\textbf{v}_4+d_{54}\tau G(\textbf{v}_4)\nn \\
        &=& -\frac{d_{53}p_{40}}{d_4} \textbf{v}_0+p_{52}\textbf{v}_2+\left(p_{53}-\frac{d_{53}p_{43}}{d_4}\right)\textbf{v}_3+
        \left(p_{54}+\frac{d_{53}}{d_4}\right)\textbf{v}_4+d_{54}\tau G(\textbf{v}_4).
\eean
Thus the energy discriminant is
\begin{equation}
    \Phi=\left(\begin{array}{ccccc}
        \frac{1}{d_1} & \frac{p_{20}}{d_2} & \frac{p_{30}}{d_3} & \frac{p_{40}}{d_4}&-\frac{d_{53}p_{40}}{d_4} \\
        0 & \frac{1}{d_2} & \frac{p_{30}}{d_3} & \frac{p_{40}}{d_4} & -\frac{d_{53}p_{40}}{d_4} \\
        0 & 0 & \frac{1}{d_3} & \frac{p_{40}}{d_4} & p_{52}-\frac{d_{53}p_{40}}{d_4} \\
        0 & 0 & 0 & \frac{1}{d_4} & p_{52}+p_{53}-\frac{d_{53}}{d_4}\\
        0 & 0 & 0 & 0 & \frac{1}{d_5}
    \end{array}\right)
    ,\quad
    \Delta_E=\frac{1}{2} \left(\Phi+\Phi^T \right).
\end{equation}
The smallest eigenvalue of $\Delta_E$ is about $1.706$. Hence, based on the theory of Section \ref{sec4}, then with sufficiently small time-step this 5-stage RK4 preserves both maximum bound and energy dissipation law.
\section*{Acknowledges}
The work of J.\ Yang is supported by National Natural Science Foundation of China (NSFC) Grant No. 11871264, Natural Science Foundation of Guangdong Province (2018A0303130123),  and NSFC/Hong Kong RRC Joint Research Scheme (NFSC/RGC 11961160718), and the research of Z.\ Yuan and Z.\ Zhou is partially supported by Hong Kong RGC grant (No. 15304420).

\bibliography{ref}

\end{document}